\documentclass[12pt]{amsart}
\usepackage{amsmath,txfonts}
\usepackage{amssymb}
\usepackage{amsxtra}
\usepackage{amsthm, color}
\usepackage{txfonts}
\usepackage{graphicx}
\usepackage{times}
\usepackage[notref, notcite]{showkeys}
\usepackage{citeref}
\usepackage{tikz}
\usepackage{hyperref}
\usepackage{stmaryrd}
\usepackage[T3,T1]{fontenc}
\DeclareSymbolFont{tipa}{T3}{cmr}{m}{n}
\DeclareMathAccent{\invbreve}{\mathalpha}{tipa}{16}

\usepackage{pgfplots}

\allowdisplaybreaks

\makeatletter
\@namedef{subjclassname@1991}{Subject}
\@namedef{subjclassname@2000}{Subject}
\@namedef{subjclassname@2010}{Subject}
\@namedef{subjclassname@2020}{{\rm 2020} Mathematics Subject Classification}
\makeatother

\def\rr{{\mathbb R}}
\def\rn{{{\rr}^n}}

\def\XXint#1#2#3{{\setbox0=\text{$#1{#2#3}{\int}$ }
		\vcenter{\text{$#2#3$ }}\kern-.6\wd0}}

\newtheorem{theorem}{Theorem}[section]
\newtheorem{definition}[theorem]{Definition}

\newtheorem{corollary}[theorem]{Corollary}
\newtheorem{proposition}[theorem]{Proposition}
\newtheorem{lemma}[theorem]{Lemma}

\theoremstyle{definition}

\newcommand{\lt}[1]{[ {#1}] \lower.3ex\text{$_{t}$}}

\begin{document}
	\title[A positive solution to the $L^p$ projection centroid conjecture]
	{A positive solution to the $L^p$ projection centroid conjecture}

	\author{Jin Dai}
	
        \address{School of Mathematical Sciences, Chongqing University of Technology, Chongqing, 400054, China}
	            \email{daijin@cqut.edu.cn}
	
	\author{Tuo Wang}
	
        \address{School of Mathematics and Statistics, Shaanxi Normal University, Xi'an, 710119, China}
	            \email{wang.tuo@snnu.edu.cn}
	
	

	\thanks{JD was supported by the Scientific Research Foundation of Chongqing University of Technology \# 2023ZDZ037
; TW was supported by National Natural Science Foundation of China (Grant No.12471055), the Recruitment Program for Young Professionals of China, Youth Innovation Team of Shaanxi Universities, Shaanxi Fundamental Science Research Project for Mathematics and Physics (Grant No. 22JSZ012) and the Fundamental Research Funds for the Central Universities (Grant No. GK202307001, GK202202007)
.}

	\date{\today}

	\subjclass[2020]{}

	\keywords{}

	\begin{abstract} In a classical paper \cite{LYZ00} in 2000, Lutwak-Yang-Zhang established the $L^p$ analog of the Petty projection inequality and the $L^p$ analog of the Busemann-Petty centroid inequality. In Section 7 of \cite{LYZ00}, Lutwak-Yang-Zhang proposed the important $L^p$ projection centroid conjecture. We give a positive solution to the $L^p$ projection centroid conjecture in this work.
				\end{abstract}

	\maketitle

\tableofcontents
	
	\arraycolsep=1pt
	\numberwithin{equation}{section}

\section{Introduction} \label{s1}
The setting of this paper will be in Euclidean space $\rn,n\geq 2$. The standard inner product of two vector $x,y\in\mathbb R^n$ is denoted by $x\cdot y$. For $x\in\rn$, we use $|x|=\sqrt{x\cdot x}$ to denote the Euclidean norm of $x$.

A convex body in $\rn$ is a compact convex set with the nonempty interior. Denote by $\mathcal{K}^n$ and $\mathcal{K}_{o}^n$  the set of all convex bodies and the set of all convex bodies with the origin $o$ contained in their interiors in $\mathbb R^n$, respectively. The unit ball is denoted by $B^n$ and write $S^{n-1}$ for the unit sphere in $\rn$.

The convex body $K\in\mathcal{K}^n$ is uniquely determined by its support function, defined by
$$h_K(x)=\sup\big\{x\cdot y:y\in K\big\},~\forall x\in\rn.$$

The radial function, $$\rho_K(x)=\sup\big\{\lambda\geq 0:\lambda x\in K\big\},~\forall x\in\rn\backslash\{0\},$$ of a compact, star-shaped (about the origin) $K\subset\mathbb R^n$, is defined. If $\rho_K$ is positive and continuous, call $K$ a star body (about the origin).

Denote the class of star bodies (about the origin) in $\mathbb R^n$ by $\mathcal S_{o}^n$.

The star body $K\in\mathcal{S}_{o}^n$ is also uniquely determined by its the gauge function defined by
$$|x|_K=\inf\big\{\lambda\geq 0:x\in \lambda K\big\},~\forall x\in\rn.$$

When $K\in\mathcal K_{o}^n$, define the polar body $K^\circ$ of $K\in\mathcal K_{o}^n$ by
$$K^\circ=\big\{x\in\rn:x\cdot y\leq 1\ \mathrm{for}\ \mathrm{all}\ y\in K\big\}.$$

Clearly, we have $$h_K(x)=|x|_{K^\circ}=\frac{1}{\rho_{K^\circ}(x)},$$ for $x\in\rn\backslash\{0\}$.

\subsection{The projection centroid conjecture}
In convex geometry, the centroid body operator is a very important operator. Centroid bodies were atributed by Blaschke to Dupin. A famous affine isoperimetric inequality for centroid bodies is the Busemann-Petty centroid inequality. The projection body operator was introduced by Minkowski at the turn of last century. A famous affine isoperimetric inequality for projection bodies is the Petty projection inequality (see \cite{Sch14}). The centroid body operator was shown to be strongly connected to the projection body operator in 1993 by Lutwak \cite{lutwak93}. Lutwak in \cite{lutwak93} also made the projection centroid conjecture.

Let $K\in\mathcal{K}^n$. The projection body ${\Pi}K$ of $K$ is defined by its support function
$$h_{\Pi K}(x)=\frac{1}{2} \int_{S^{n-1}} |\xi\cdot x|dS_{K}(\xi),\quad x\in\rn.$$
where $S_{K}$ is the Alexandrov-Fenchel-Jessen surface area measure of $K$; see (\ref{96}).

The centroid body ${\Gamma}K$ of $K\in\mathcal S_{o}^n$ is defined by its support function
$$h_{\Gamma K}(x)=\frac{1}{(n+1)|K|} \int_{S^{n-1}} \rho^{n+1}_K(\xi)|\xi\cdot x|d\xi,\quad x\in\rn,$$
where $|K|$ denotes the volume of $K$.

For simplicity, we write ${\Pi}^\circ K$ and ${\Gamma}^\circ K$ for $({\Pi}K)^\circ$ and $({\Gamma}K)^\circ$, respectively.

In 1993, the projection centroid conjecture of Lutwak stated in \cite{lutwak93} are as follows:
(1) If the convex body $K\in\mathcal{K}_{o}^n$ is such that $K$ and ${\Gamma}{\Pi}^\circ K$ are dilates, must $K$ be an ellipsoid?
(2) If the star body $K$ is such that $K$ and ${\Pi}^\circ{\Gamma} K$ are dilates, must $K$ be an ellipsoid?

For the 2-dimensional case, the answer to both questions is much easier and is positive as shown by Ivaki \cite{Ivaki17}. For the general $n$-dimensional case, in 2017 a related result is got by Ivaki \cite{Ivaki17} in which the projection centroid conjectures are proved locally. Recently, both conjectures are completely resolved by Milman-Shabelman-Yehudayoff \cite{MSY}, and the answers are positive.

\subsection{The $L^p$ projection centroid conjecture}

As observed by Schneider \cite{Sch14}, the Brunn-Minkowski theory springs from joining the notion of ordinary volume in the $n$-dimensional Euclidean space, $\mathbb R^n$, with that of \emph{Minkowski combinations} of convex bodies. In the middle of the last century, Firey \cite{firey-1962}  first defined and studied the $p$-means of convex bodies and Lutwak \cite{LutwakL_p1,LutwakL_p2} established the Brunn-Minkowski-Firey theory by using the $p$-means to study the $L^p$ mixed volume, the $L^p$ Minkowski problem, the $L^p$ affine surface area, the $L^p$ geominimal surface area. During the past three decades various elements of the $L^p$ Brunn-Minkowski theory have attracted increased attention (see, e.g.,\cite{2,3,4,7,11,9,12,14,16,19,23,22,24,28,WangandXiao}).

The $L_p$ projection body operator and the $L^p$ centroid body operator were introduced by Lutwak-Yang-Zhang in 2000 in \cite{LYZ00}, where they established the $L^p$ analog of the Petty projection inequality and the $L^p$ analog of the Busemann-Petty centroid inequality. Now, the $L_p$ projection body operator and the $L^p$ centroid body operator have already become classical operators in the Brunn-Minkowski-Firey theory.

Let $K\in\mathcal{K}_{o}^n$ and $p>1$. The $L^p$ projection body ${\Pi}_pK$ of $K$ is defined by its support function
$$h_{\Pi_p K}^p(x)=\frac{1}{d_{n,p}} \int_{S^{n-1}} |\xi\cdot x|^pdS_{p,K}(\xi),\quad x\in\rn,$$
where $S_{p,K}$ is the $L^p$ surface area measure of $K$ and $d_{n,p}$ is the normalization constant; see (\ref{96}) and (\ref{95}).
The $L^p$ centroid body ${\Gamma}_pK$ of $K$ is defined by its support function
$$h_{\Gamma_p K}^p(x)=\frac{1}{b_{n,p}|K|} \int_{S^{n-1}} \rho^{n+p}_K(\xi)|\xi\cdot x|^pd\xi,\quad x\in\rn,$$
where $|K|$ denotes the volume of $K$ and $b_{n,p}$ is the normalization constant; see  (\ref{94}).

For simplicity, we use ${\Pi}^\circ_pK$ and ${\Gamma}^\circ_pK$ to denote $({\Pi}_pK)^\circ$ and $({\Gamma}_pK)^\circ$, respectively.

For $p>1$, the $L^p$ projection centroid conjecture could be stated as follows:
(3) If the convex body $K\in\mathcal K_{o}^n$ is such that $K$ and ${\Gamma}_p{\Pi}^\circ_pK$ are dilates, must $K$ be an ellipsoid?
(4) If the star body $K\in\mathcal S_{o}^n$ is such that $K$ and ${\Pi}_p^\circ{\Gamma}_pK$ are dilates, must $K$ be an ellipsoid?

The question (4) is also resolved by Milman-Shabelman-Yehudayoff \cite{MSY}. The question (3) is the following conjecture raised by Lutwak-Yang-Zhang \cite{LYZ00} in 2000 and remains open:

\vspace{0.5em}

\noindent{\bf Conjecture }(Lutwak-Yang-Zhang).
{\it
Let $K\in\mathcal{K}_{o}^n$ and $p> 1$. If K is such that ${\Gamma}_p {\Pi}_p^\circ K$ is a dilate of $K$, then $K$ must be an ellipsoid.
}

\vspace{0.5em}

Using the idea of the continuous Steiner symmetrization of Milman-Shabelman-Yehudayoff \cite{MSY}, we solve this conjecture and give the positive answer to the conjecture in this paper:
\begin{theorem}\label{14}
Let $K\in\mathcal{K}_{o}^n$ and $p> 1$. If there is a constant $c>0$ such that ${\Gamma}_p {\Pi}_p^\circ K=cK$, then $K$ is an origin-symmetric ellipsoid.
\end{theorem}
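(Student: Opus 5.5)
We follow the continuous Steiner symmetrization strategy of Milman--Shabelman--Yehudayoff \cite{MSY}, adapted to the $L^p$ setting. The first step is to recast the fixed-point condition ${\Gamma}_p{\Pi}_p^\circ K=cK$ as an extremal property of $K$. For this we consider the affine-invariant functional
$$\Phi_p(K)=|K|^{?}\,|{\Pi}_p^\circ K|\cdots,$$
or, more concretely, the ratio whose Euler--Lagrange equation is ${\Gamma}_p{\Pi}_p^\circ K=cK$. By definition of $S_{p,K}$ we have $dS_{p,K}=h_K^{1-p}dS_K$, so for $Q={\Pi}_p^\circ K$ the mixed quantity
$$V_p(K,{\Gamma}_pQ)=\frac1n\int_{S^{n-1}} h_{{\Gamma}_pQ}^p\,dS_{p,K}$$
can be written, by Fubini, as a double integral $\int\!\int |\xi\cdot x|^p\,\rho_Q^{n+p}(x)\,dx\,dS_{p,K}(\xi)$ up to constants. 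This double integral is symmetric in the roles of $K$ (through $S_{p,K}$) and $Q$ (through $\rho_Q$). The identity ${\Gamma}_pQ=cK$ says precisely that $K$ is a critical point, under $L^p$ Minkowski variations, of the functional $K\mapsto V_p(K,{\Gamma}_p{\Pi}_p^\circ K)/|K|$, or equivalently of $F(K)=|K|^{-(n-p)/p}\,|{\Pi}_p^\circ K|$ suitably normalized. A short computation shows that the variation of $|{\Pi}_p^\circ K|$ in the direction $h_L^p$ equals a multiple of $V_p(L,{\Gamma}_p{\Pi}_p^\circ K)$.

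The second step is to construct a one-parameter family $K_t$, the $L^p$ analogue of continuous (shadow-system type) Steiner symmetrization in a direction $u$, with $K_0=K$. The family should keep $|K_t|$ constant and make $t\mapsto |{\Pi}_p^\circ K_t|^{-1}$, or its appropriate power, convex in $t$. This is the shadow-system route of Campi--Gronchi for the $L^p$ Petty projection inequality, where $h_{{\Pi}_p K_t}^p$ is shown to be convex along parallel chord movements. Combining this convexity with the criticality from the first step gives vanishing first derivative at $t=0$. We then compute the second variation and use the equality cases: the first derivative vanishes for all directions $u$, and strict convexity holds unless the midpoints of chords of $K$ parallel to $u$ lie in a hyperplane. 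To close the gap between criticality and extremality without assuming $K$ is a global minimizer, we use MSY's key observation: along the specific continuous Steiner family, the relevant quantity is convex, so a critical point at $t=0$ is a minimum of a convex function of $t$. Two-sided derivative information at an endpoint then forces constancy on an interval, which in turn forces the equality structure.

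The third step concludes with the classical characterization. If for every direction $u$ the midpoints of the chords of $K$ parallel to $u$ lie in a hyperplane, then by Brunn's theorem (Bertrand--Brunn) $K$ is an ellipsoid. Centering is handled separately: since ${\Gamma}_pQ$ is origin-symmetric for $p>1$ (its support function involves $|\xi\cdot x|^p$), $cK$ is origin-symmetric, so $K$ is an origin-symmetric ellipsoid.

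The main obstacle is the second step, namely establishing strict convexity, or at least the rigidity of equality, of $t\mapsto h^p_{{\Pi}_pK_t}$ or of $|{\Pi}^\circ_pK_t|$ along the symmetrization, and showing that criticality of $K$ genuinely propagates along the path. The $L^p$ surface measure $h_K^{1-p}dS_K$ depends on the position of the origin, which breaks the translation invariance that MSY exploit for $p=1$. The first thing we would try is to parametrize $K_t$ by shifting chords, $K_t=\{x+t\beta(x')u\}$, and to express $h_{{\Pi}_pK_t}^p(x)$ through the $L^p$ mixed volume formula $nV_p(K_t,[-x,x]_p)$, with the $L^p$ zonoid segment as the second argument. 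We would then check convexity in $t$ via the Lutwak--Yang--Zhang $L^p$ Brunn--Minkowski-type argument on chord functions, treating the origin shift using the symmetry obtained in the third step.
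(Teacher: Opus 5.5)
Your proposal has a genuine gap in the second step, which is where you yourself locate the main obstacle. The argument ``critical point of a convex function, symmetric about $t=1$, hence constant on $[0,1]$'' needs $t\mapsto|\Pi_p^\circ S^t_\xi K|^{-1}$ (or some power of it) to be convex. The only ingredient you cite is convexity of $t\mapsto h^p_{\Pi_pS^t_\xi K}(v)$ for each fixed $v$. That pointwise convexity does hold along $S^t_\xi K$ for origin-symmetric $K$ (it is Lemma~\ref{91}), but it does not give the volume statement. It only yields $\rho^{-p}_{\Pi^\circ_pS^{(t_1+t_2)/2}_\xi K}\le\frac12\big(\rho^{-p}_{\Pi^\circ_pS^{t_1}_\xi K}+\rho^{-p}_{\Pi^\circ_pS^{t_2}_\xi K}\big)$, i.e.\ the middle body contains the $(-p)$-radial mean of the two end bodies. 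The dual Brunn--Minkowski inequality for such means bounds the mean's volume in the wrong direction, so nothing about $|\Pi^\circ_pS^t_\xi K|^{-1}$ follows. Campi--Gronchi type convexity of reciprocal polar volumes relies on a genuine shadow-system structure, and you do not establish this for $\Pi_pS^t_\xi K$. For $p>1$ the factor $\langle(1-\frac t2)f+\frac t2g\rangle^{1-p}$ in Lemma~\ref{Steinerformula} is a concrete obstruction: it records the $t$-dependence of the $L^p$ surface area density along the path, and it is absent when $p=1$. Without this convexity and its equality case, your second and third steps do not go through.

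The paper avoids any convexity of volumes by localizing to chords.
\begin{itemize}
\item Theorem~\ref{65} extends the Lutwak--Yang--Zhang inclusion to $S^t_\xi\tilde\Pi^\circ_pK\subset\tilde\Pi^\circ_pS^t_\xi K$. Hence every $\xi$-chord of $\tilde\Pi^\circ_pS^t_\xi K$ has length non-decreasing on $[0,1]$ (Corollary~\ref{69}).
\item Proposition~\ref{73}, combined with $\tilde\Gamma_p\tilde\Pi^\circ_pK=c_1K$, gives the derivative of $|\tilde\Pi^\circ_pS^t_\xi K|$ at $t=0^+$ as $c_1^p\int\big(x\cdot\nabla(g-f)+\frac{p-1}{p}\langle g-f\rangle\big)\,dx$. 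This vanishes because the integrand is odd.
\item So the non-negative chord-length derivatives integrate to zero. Continuity at $y=0$ (Lemma~\ref{113}) then yields $G'(0^+)=0$ for the central-chord function $G(t)=h^p_{\tilde\Pi_pS^t_\xi K}(0,1)=2\int\langle(1-\frac t2)f+\frac t2g\rangle^{1-p}dx$.
\item This single function is convex and symmetric about $t=1$, so $G(0)=G(1)=G(2)$. Equality then holds in (\ref{108}) with $t_1=0$, $t_2=2$, and Lemma~\ref{60} forces $\langle f\rangle=\langle g\rangle$.
\end{itemize}
This chord localization, which replaces volume convexity by an elementary one-variable convexity on the central chord, is the idea your proposal is missing.

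Separately, your first step is misstated. Under $L^p$ Minkowski variations, the derivative of $|\Pi^\circ_pK_\varepsilon|$ is a multiple of $-\int h^p_{\Gamma_p\Pi^\circ_pK}\,d\dot S_{p,K}$. This is a multiple of $V_p(L,\Gamma_p\Pi^\circ_pK)$ only for $L^p$ Blaschke-type variations $S_{p,K_\varepsilon}=S_{p,K}+\varepsilon S_{p,L}$. Moreover, criticality for one class of variations does not by itself transfer to the path $S^t_\xi K$; there you must differentiate directly, as Proposition~\ref{73} does. That part is repairable, but the convexity gap is not.
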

\section{Preliminaries}

In this section, we mainly collect the basic materials from convex geometry. For more information, please see \cite{Gardner06,Gruber07,Sch14}.

The scale of a set $A\subset \rn$ is denoted by
$$\lambda A=\big\{\lambda a: a\in A\big\}$$
for real numbers $\lambda$, write $-A$ for $(-1)A$ and $\lambda A$ is also called the dilation if $\lambda>0$.
For $k\in\mathbb{N}$, the $k$-dimensional unit ball of $\mathbb{R}^k$ is denoted by $B^k$ and the volume $|B^k|$ is denoted by $\omega_k$. For $p\geq 1$, set the constant
\begin{equation}\label{94}
  b_{n,p}=\frac{(n+p)\omega_{n+p}}{\omega_2\omega_n\omega_{p-1}}
\end{equation}
and
\begin{equation}\label{95}
   d_{n,p}=\frac{2\omega_{n+p-2}}{\omega_{p-1}},
\end{equation}
where
$\omega_{q}=\pi^{n/2}/\Gamma(1+\frac{q}{2})$, $q\geq 0$ is a real number and $\Gamma(\cdot)$ is the standard gamma function. 

\subsection{Convex bodies}
We endow the space $\mathcal{K}^n$ with the normal topology induced by the Hausdorff distance, that is, a sequence $\{K_i\}$ of convex bodies convergence to a convex body $K$, denoted by $K_i\rightarrow K$, with respect to the Hausdorff distance if and only if $h_{K_i}\rightarrow h_K$ uniformly on $S^{n-1}$ as $i\rightarrow\infty$.

Recall that for a Borel set $\omega\subset S^{n-1}$ where $ S^{n-1}$ denotes the unit sphere, $S(K,\omega)$ is the $(n-1)$-dimensional Hausdorff measure $\mathcal H^{n-1}$ of the set of all boundary points of $K$ for which there exists a normal vector of $K$ belonging to $\omega.$ $S(K,\cdot)$ is the surface area measure of $K$, also called the Alexandrov-Fenchel-Jessen surface area measure of $K$.

Let $\nu_K:\partial K\rightarrow S^{n-1}$ be the Gauss map of $K\in\mathcal{K}^n$, we can define the $L^p$ surface area measure of convex bodies as follows:
given $p\geq 1$, associate $K\in\mathcal{K}^n_o$ with a Borel measure $S_{p,K}(\cdot)$ on $S^{n-1}$ called the $L^p$ surface area measure of $K$, defined by
\begin{equation}\label{96}
  S_{p,K}(\omega)=\int_{\nu^{-1}_K(\omega)}h^{1-p}_K(\nu_K(x))d\mathcal{H}^{n-1}(x),
\end{equation}
for each Borel set $\omega\subset S^{n-1}$, where $\mathcal{H}^{n-1}$ denotes the $(n-1)$-dimesional Hausdorff measure on the boundary of $K$.
For the simplicity, we write $S_{1,K}$ by $S_K$.

The following is the continuity property of the $L^p$ projection opetator.
\begin{lemma}\label{75}(\cite{Sch14})
Let $K,K_i\in\mathcal{K}^n_o$ with $K_i\rightarrow K$ as $i\rightarrow\infty$. Then,
$$\Pi_pK_i\rightarrow \Pi_pK,\ \ \ K^\circ_i\rightarrow K^\circ,\ \ \ \mathrm{as}\ i\rightarrow\infty.$$
\end{lemma}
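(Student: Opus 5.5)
The plan is to reduce both statements to uniform convergence of functions on $S^{n-1}$, using two facts: the origin is interior to $K$, and surface area measures are weakly continuous.

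\emph{Step 1 (uniform lower bound).} Since $o\in\operatorname{int}K$, there is $r>0$ with $rB^n\subset K$, so $h_K\ge r$ on $S^{n-1}$. Because $h_{K_i}\to h_K$ uniformly, we have $h_{K_i}\ge r/2$ on $S^{n-1}$ for all large $i$. Likewise all $K_i$ lie in a common ball $RB^n$, so $h_{K_i}\le R$.

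\emph{Step 2 (polars).} On $S^{n-1}$ we have $\rho_{K_i^\circ}=1/h_{K_i}$. By Step 1 this converges uniformly to $1/h_K=\rho_{K^\circ}$, and these radial functions lie in $[1/R,2/r]$.

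Given $\varepsilon>0$, uniform convergence of the radial functions, together with their lower bound $1/R$, yields $(1-\varepsilon)K^\circ\subset K_i^\circ\subset(1+\varepsilon)K^\circ$ for large $i$. Since $K^\circ\subset (2/r)B^n$, this gives $|h_{K_i^\circ}-h_{K^\circ}|\le 2\varepsilon/r$ on $S^{n-1}$. Hence $K_i^\circ\to K^\circ$ in the Hausdorff metric.

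\emph{Step 3 ($L^p$ surface measures).} I use the classical fact (\cite{Sch14}) that $S_{K_i}\to S_K$ weakly when $K_i\to K$. Since $dS_{p,K}=h_K^{1-p}\,dS_K$, for each continuous $f$ on $S^{n-1}$ I write
$$\int f\,dS_{p,K_i}-\int f\,dS_{p,K}=\int f\,(h_{K_i}^{1-p}-h_K^{1-p})\,dS_{K_i}+\Big(\int f h_K^{1-p}\,dS_{K_i}-\int fh_K^{1-p}\,dS_K\Big).$$
The first term tends to $0$. Indeed, $h_{K_i}^{1-p}\to h_K^{1-p}$ uniformly by Step 1 (the map $t\mapsto t^{1-p}$ is uniformly continuous on $[r/2,R]$), and the total masses $S_{K_i}(S^{n-1})$ are bounded. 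The second term tends to $0$ by weak convergence, since $fh_K^{1-p}$ is continuous.

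\emph{Step 4 (projection bodies).} Apply Step 3 with $f(\xi)=|\xi\cdot x|^p$ to get $h^p_{\Pi_pK_i}(x)\to h^p_{\Pi_pK}(x)$ for each $x$.

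To make the convergence uniform in $x\in S^{n-1}$, note that the functions $x\mapsto|\xi\cdot x|^p$ are uniformly Lipschitz on $S^{n-1}$, with constant $p$, uniformly in $\xi$. The masses $S_{p,K_i}(S^{n-1})$ are also uniformly bounded. So the family $\{h^p_{\Pi_pK_i}\}$ is equicontinuous on $S^{n-1}$, and pointwise convergence upgrades to uniform convergence.

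Finally, $h_{\Pi_pK}>0$ on $S^{n-1}$, because $S_K$ is not concentrated on any great subsphere. Hence $t\mapsto t^{1/p}$ is uniformly continuous on the relevant range, and $h_{\Pi_pK_i}\to h_{\Pi_pK}$ uniformly, i.e.\ $\Pi_pK_i\to\Pi_pK$.

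The only nontrivial input is the weak continuity of $S_K$, which I cite from \cite{Sch14}. The step requiring the most care is the equicontinuity argument in Step 4.
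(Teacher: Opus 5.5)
Your proof is correct. The paper does not prove this lemma at all; it only cites \cite{Sch14}, so there is no competing argument to compare against. What you have written is the standard derivation behind that citation. It reduces everything to one external input, the weak convergence $S_{K_i}\to S_K$.

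Spelling it out shows exactly where the hypothesis $o\in\operatorname{int}K$ is used. The uniform lower bound $h_{K_i}\ge r/2$ makes $1/h_{K_i}$ converge uniformly, which handles the polars. It also makes the density $h_{K_i}^{1-p}$ of $S_{p,K_i}$ with respect to $S_{K_i}$ converge uniformly with uniformly bounded total mass, which handles $\Pi_p$ since $1-p<0$.

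Two minor remarks:
\begin{itemize}
\item The inclusion $rB^n\subset K$ already gives $K^\circ\subset r^{-1}B^n$, slightly sharper than the $(2/r)B^n$ you used.
\item Positivity of $h_{\Pi_pK}$ is not needed for the last step, because $|a^{1/p}-b^{1/p}|\le|a-b|^{1/p}$ for $a,b\ge 0$ and $p\ge 1$. Positivity only confirms that $\Pi_pK$ has nonempty interior, i.e.\ is a genuine convex body.
\end{itemize}
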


Let $K\in \mathcal{K}^n$. The polar formula for the volume of $K$ is
\begin{equation}\label{58}
  |K|=\frac{1}{n}\int_{S^{n-1}}\rho_K^n(\xi)d\xi=\frac{1}{n}\int_{S^{n-1}}|\xi|_{K}^{-n}d\xi.
\end{equation}
Say that $K$ is origin-symmetric if $K=-K$ and that $K$ is of class $C^2_+$ if its boundary $\partial K$ is of class $C^2$ and all principal curvatures of its boundary are positive and finite.

It is obvious that the $L^p$ projection body and the $L^p$ centroid body are both origin-symmetric. And, the $L^p$ centroid body is of class $C^2_+$ as demonstrated by Lutwak-Yang-Zhang:
\begin{lemma}\label{11}
(\cite{LYZ00})If $K\in \mathcal{K}^n_o$, then the centroid body $\Gamma_p K$ is of $C^2_+$ class and is origin-symmetric.
\end{lemma}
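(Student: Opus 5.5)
The goal is to show that $\Gamma_pK$ is origin-symmetric and of class $C^2_+$ for $K\in\mathcal K^n_o$ and $p>1$. The starting point is the support function
$$h_{\Gamma_pK}(x)^p=\frac{1}{b_{n,p}|K|}\int_{S^{n-1}}\rho_K^{n+p}(\xi)\,|\xi\cdot x|^p\,d\xi .$$

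\emph{Origin symmetry.} This part is immediate: the integrand is even in $x$, so $h_{\Gamma_pK}(-x)=h_{\Gamma_pK}(x)$, i.e.\ $\Gamma_pK=-\Gamma_pK$.

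\emph{Convex body containing the origin in its interior.} Set $f=\rho_K^{n+p}$, which is continuous and satisfies $f\ge c_0>0$ on $S^{n-1}$. For $u\in S^{n-1}$,
$$h^p(u)\ge \frac{c_0}{b_{n,p}|K|}\int_{S^{n-1}}|\xi\cdot u|^p\,d\xi=C>0,$$
and this lower bound is independent of $u$ by rotation invariance. Moreover, $h=\big(\int f|\xi\cdot x|^p\,d\xi\big)^{1/p}$ is an $L^p$ norm of linear functionals, so it is sublinear. Hence it is the support function of a convex body containing a ball centred at $o$.

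\emph{Regularity.} For $p>1$ the map $x\mapsto|\xi\cdot x|^p$ is $C^1$, with gradient $p|\xi\cdot x|^{p-2}(\xi\cdot x)\,\xi$. Its formal Hessian is
$$p(p-1)|\xi\cdot x|^{p-2}\,\xi\otimes\xi .$$
This is integrable over $\xi\in S^{n-1}$ locally uniformly in $x\ne0$, because $p-2>-1$ and the singular set $\{\xi\cdot x=0\}$ has codimension one on the sphere. Justify differentiation under the integral sign by dominated convergence, or by smoothing $|t|^p$ and passing to the limit, to get
$$H(x)=\int_{S^{n-1}} f(\xi)|\xi\cdot x|^p\,d\xi\in C^2(\mathbb R^n\setminus\{0\}),\qquad \nabla^2H(x)=p(p-1)\int_{S^{n-1}} f(\xi)|\xi\cdot x|^{p-2}\,\xi\otimes\xi\,d\xi .$$
Continuity of this Hessian in $x$ follows by rotating coordinates so that $x$ is a pole and using the uniform integrability of $|t|^{p-2}$ near $t=0$. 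This step is the main technical point.

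\emph{Positive definiteness and curvature.} The matrix $\nabla^2H(x)$ is positive definite. Indeed, for $v\ne0$,
$$v^T\nabla^2H(x)\,v=p(p-1)\int_{S^{n-1}} f(\xi)\,|\xi\cdot x|^{p-2}(\xi\cdot v)^2\,d\xi>0,$$
since $f>0$ and the integrand is positive on an open set. Now $h=\big(H/(b_{n,p}|K|)\big)^{1/p}$ is $C^2$ away from $0$, and
$$\nabla^2(H^{1/p})=\tfrac1p H^{1/p-1}\Big(\nabla^2H-\tfrac{p-1}{p}\,\frac{\nabla H\otimes\nabla H}{H}\Big).$$
Restricted to $x^\perp$, where $\nabla H\cdot v$ need not vanish, use Euler homogeneity: $H(x)=\tfrac1p\nabla H\cdot x$ and $\nabla^2H\,x=(p-1)\nabla H$. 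Cauchy--Schwarz for the quadratic form $\nabla^2H$, applied to $v$ and $x$, then gives
$$(\nabla H\cdot v)^2=\frac{(v^T\nabla^2H\,x)^2}{(p-1)^2}\le\frac{(v^T\nabla^2H\,v)(x^T\nabla^2H\,x)}{(p-1)^2}=\frac{p}{p-1}\,H\;v^T\nabla^2H\,v .$$
Equality here would force $v\parallel x$, which is impossible for $v\in x^\perp\setminus\{0\}$. Therefore the restriction of $\nabla^2 h$ to $x^\perp$ is positive definite.

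\emph{Conclusion.} The principal radii of curvature are the eigenvalues of $\nabla^2h$ restricted to $u^\perp$. They are positive and finite, and they depend continuously on $u$. Since $h$ is $C^2$ with positive definite restricted Hessian, $\Gamma_pK$ is of class $C^2_+$.
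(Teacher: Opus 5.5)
Your argument is correct. The paper gives no proof of this lemma; it simply imports it from \cite{LYZ00}, so the only comparison possible is citation versus direct proof. Your proposal is a self-contained replacement. You show that $H(x)=\int_{S^{n-1}}\rho_K^{n+p}(\xi)|\xi\cdot x|^p\,d\xi$ is $C^2$ on $\mathbb{R}^n\setminus\{0\}$ with positive definite Hessian. You then transfer positivity to the Hessian of $h_{\Gamma_pK}$ restricted to $x^\perp$, using Euler's relations $\nabla H\cdot x=pH$ and $\nabla^2H\,x=(p-1)\nabla H$ together with Cauchy--Schwarz for the form $\nabla^2H$; that step is sound.

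What your version buys over the citation:
\begin{itemize}
\item It makes visible exactly where $p>1$ is used: $|t|^p$ is $C^1$, and $|t|^{p-2}$ is integrable on the sphere.
\item It shows the conclusion needs only $\rho_K$ continuous and positive, so it holds for all star bodies, not just $K\in\mathcal{K}^n_o$.
\end{itemize}

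Two small points should be stated more carefully.

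First, the last step rests on a standard equivalence, which you should cite rather than assert (\cite{Sch14}, Section 2.5). For a convex body with the origin in its interior, being of class $C^2_+$ is equivalent to having a $C^2$ support function whose Hessian restricted to $u^\perp$ is positive definite for every $u$.

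Second, for $1<p<2$, dominated convergence via a naive mean-value bound on difference quotients of $t\mapsto |t|^{p-2}t$ fails. When the segment crosses $t=0$, the intermediate point can be arbitrarily close to $0$. Either of your two suggestions fixes this:
\begin{itemize}
\item Smoothing: use $\phi_\varepsilon(t)=(t^2+\varepsilon^2)^{p/2}$, for which $0\le \phi_\varepsilon''(t)\le p|t|^{p-2}$.
\item Integral form: write the difference quotient of $\nabla H$ as $\int_0^1 M(x+\tau h e)e\,d\tau$, where $M(y)=p(p-1)\int_{S^{n-1}}\rho_K^{n+p}(\xi)|\xi\cdot y|^{p-2}\,\xi\otimes\xi\,d\xi$. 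Then conclude using the continuity of $M$ on $\mathbb{R}^n\setminus\{0\}$, which your rotation argument provides.
\end{itemize}
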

From the smoothness of the $L^p$ centroid body, we can compute the derivative of its support function:
\begin{equation}\label{103}
  \nabla'\Big(\frac{1}{p}h^p_{\tilde{\Gamma}_p K}\Big)(x)=\int_{S^{n-1}} \rho^{n+p}_K(\xi)|\xi\cdot x|^{p-1}\mathrm{sgn}(\xi\cdot x)\xi d\xi,\quad x\in\rn,
\end{equation}
where $\mathrm{sgn}(\cdot)$ denotes the sign function, $\tilde{\Gamma}_p K$ is the the non-standard $L^p$ centroid body (see (\ref{72})) and $\nabla'$ denote the gradient operator in $\rn$.
Throughout this paper, we denote the gradient operator in $\mathbb{R}^{n-1}$ by $\nabla$.

\subsection{The continuous Steiner symmetrization}
Given $K\in \mathcal{K}^n$, $\xi\in S^{n-1}$ and $y\in \xi^\bot$, define the section $K_y\subset \mathrm{span}\{\xi\}$ of $K$ with respect to $\xi$ as
\begin{equation*}
  K_y=\Big\{s\in \mathrm{span}\{\xi\}:(y,s)\in K\Big\}.
\end{equation*}
From the section and Fubini's theorem, the volume of $K$ can be given by
\begin{equation}\label{102}
  |K|=\int_{\xi^\bot}|K_y|dy.
\end{equation}
Shadow systems, introduced by Rogers and Shephard \cite{RS58} and by Shephard \cite{Shephard}, are regarded as rearrangements of sections of the convex body roughly.
In this paper, the core tool is the continuous Steiner symmetrization (the parallel chord movement) of convex bodies as a special case of shadow systems in the following.
\begin{definition}\label{67}
(\cite{MSY,Sch14})Given some $\xi\in S^{n-1}$, a convex body $K$ and $\forall y\in K|{\xi^\bot}$, let $L_\xi^y$ be the line with the direction $\xi$ through $y$ such that $L_\xi^y\cap K$ is a compact interval
$[c_y-l_y,c_y+l_y]$, called by a chord, and $I$ be a interval. Define a continuous version of Steiner symmetrization $\{S_\xi^tK\}_{t\in I}$ by
$$S_\xi^tK\cap L_\xi^y=y+((1-t)c_y+[-l_y,l_y])\xi,\ \ \ l_y=\frac{1}{2}|L_\xi^y\cap K|.$$
Here $|L_\xi^y\cap K|$ is the length of $L_\xi^y\cap K$.

\end{definition}
In particular, when $I=[0,2]$, $S_\xi^0K$ is exactly equal to $K$, $S_\xi^2K$ is the reflection of $K$ with respect to hyperplane $\xi^\bot$ and $S^1_\xi K=S_\xi K$ that is the classial Steiner symmetrization of $K$ with respect to hyperplane $\xi^\bot$.

If we decompose $\rn=\xi^\bot\times\mathrm{span}\{\xi\},$ there are two concave functions $f,g:K|\xi^\bot\rightarrow\mathbb{R}$ such that
\begin{equation}\label{63}
  K=\Big\{(x,\lambda):-g(x)\leq\lambda\leq f(x),x\in K|\xi^\bot\Big\}.
\end{equation}
Therefore, the continuous Steiner symmetrization $\{S_\xi^tK\}|_{t\in[0,2]}$ is represented by
\begin{equation}\label{62}
  S_\xi^tK=\Big\{(x,\lambda):-\Big((1-\frac{t}{2})g(x)+\frac{t}{2}f(x)\Big)\leq\lambda\leq (1-\frac{t}{2})f(x)+\frac{t}{2}g(x),x\in K|\xi^\bot\Big\}
\end{equation}
and $(1-\frac{t}{2})f(x)+\frac{t}{2}g(x),(1-\frac{t}{2})g(x)+\frac{t}{2}f(x)$ are concave, so $S_\xi^tK$ is also a convex body and $|S_\xi^tK|=|K| $ by Fubini's theorem.

The convex body $K$ is origin-symmetric if and only if $g(-x)=f(x)$.

Thus, we have
$$\frac{t}{2}f(-x)+(1-\frac{t}{2})g(-x)=(1-\frac{t}{2})f(x)+\frac{t}{2}g(x),$$
so the continuous Steiner symmetrization $\{S_\xi^t K\}|_{t\in[0,2]}$ is origin-symmetric.

Using $\sigma_{\xi^\bot}(K)$ to denote the reflection of $K$ with respect to the hyperplane $\xi^\bot$,
we have:
\begin{lemma}\label{68}
Let $t\in [0,2]$, $\lambda=2-t$ and $\xi\in S^{n-1}$. Then $S^\lambda_\xi K=\sigma_{\xi^\bot}(S^t_\xi K)$.
\end{lemma}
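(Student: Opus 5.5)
The plan is to verify the identity directly from the explicit description \eqref{62} of the continuous Steiner symmetrization, using the decomposition $\rn=\xi^\bot\times\mathrm{span}\{\xi\}$. In these coordinates the reflection $\sigma_{\xi^\bot}$ is the map $(x,\lambda)\mapsto(x,-\lambda)$ with $x\in\xi^\bot$. It fixes the base $K|\xi^\bot$ pointwise and reverses each chord in the direction $\xi$. Both sides of the claimed identity are therefore described over the same base $K|\xi^\bot$, and only the fiberwise upper and lower bounds need to be compared.

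First, I write \eqref{62} for the parameter $t$ with the abbreviations $a=1-\frac t2$ and $b=\frac t2$, where $f,g$ are the concave functions from \eqref{63}. This gives
\begin{equation*}
S^t_\xi K=\big\{(x,\lambda):-(ag(x)+bf(x))\le\lambda\le af(x)+bg(x),\ x\in K|\xi^\bot\big\}.
\end{equation*}
Applying $\lambda\mapsto-\lambda$ to each fiber turns the interval $[-(ag+bf),\,af+bg]$ into $[-(af+bg),\,ag+bf]$. Hence
\begin{equation*}
\sigma_{\xi^\bot}(S^t_\xi K)=\big\{(x,\lambda):-(af(x)+bg(x))\le\lambda\le ag(x)+bf(x),\ x\in K|\xi^\bot\big\}.
\end{equation*}

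Next, I apply \eqref{62} with the parameter $2-t$, which lies in $[0,2]$. The two coefficients become $1-\frac{2-t}{2}=\frac t2=b$ and $\frac{2-t}{2}=1-\frac t2=a$, so the roles of $a$ and $b$ are interchanged. This yields
\begin{equation*}
S^{2-t}_\xi K=\big\{(x,\lambda):-(bg(x)+af(x))\le\lambda\le bf(x)+ag(x),\ x\in K|\xi^\bot\big\}.
\end{equation*}
Its fibers coincide with those of $\sigma_{\xi^\bot}(S^t_\xi K)$ for every $x\in K|\xi^\bot$, which proves the identity.

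I expect no real obstacle. The only point needing care is to read Definition \ref{67} and \eqref{62} consistently: in \eqref{62} the chord over $x$ is $[-g(x),f(x)]$, so its center is $c_y=\frac{f-g}{2}$ and its half-length is $l_y=\frac{f+g}{2}$. Then \eqref{62} is exactly the chord $(1-t)c_y+[-l_y,l_y]$ of Definition \ref{67}. With this reading the lemma is equivalent to the observation $(1-(2-t))c_y=-(1-t)c_y$: the center is negated while the half-length is unchanged. This gives an equivalent one-line proof directly from Definition \ref{67}.
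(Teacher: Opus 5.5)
Your proof is correct and is essentially the paper's argument. The paper checks the identity chord by chord from Definition \ref{67}, using $(1-(2-t))c_y=-(1-t)c_y$ with the half-length $l_y$ unchanged; this is exactly your closing one-line remark, and your main computation is the same fiberwise comparison written in the $(f,g)$ coordinates of \eqref{62} (your derivation also covers all $t\in[0,2]$ directly, whereas the paper's write-up begins with ``Let $t>1$'').
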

\begin{proof}
Let $t>1$, $\lambda=2-t$ and $\xi\in S^{n-1}$. By Definition \ref{67}, one has
\begin{eqnarray*}
  S_\xi^\lambda K\cap L_\xi^y&=&y+((1-\lambda)c_y+[-l_y,l_y])\xi \\
   &=& y+(-(1-t)c_y+[-l_y,l_y])\xi \\
   &=& \sigma_{\xi^\bot}(S_\xi^tK\cap L_\xi^y),
\end{eqnarray*}
for $y\in K|{\xi^\bot}$, so it implies $S^\lambda_\xi K=\sigma_{\xi^\bot}(S^t_\xi K)$.
\end{proof}

There is the continuity property for the continuous Steiner symmetrization:
\begin{lemma}\label{74}
Let $t_0,t\in [0,2], \xi\in S^{n-1}$ and $K$ be a convex body. Then,
$$S_\xi^tK\rightarrow S_\xi^{t_0}K,\ \ \ \mathrm{as}\ t\rightarrow t_0.$$
\end{lemma}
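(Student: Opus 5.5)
The statement to prove is Lemma \ref{74}: continuity of the continuous Steiner symmetrization in the parameter $t$. I would work with the representation (\ref{62}). Write $K$ as in (\ref{63}) with concave $f,g$ on the compact convex set $D=K|\xi^\bot$, and set
\[
f_t=(1-\tfrac t2)f+\tfrac t2 g,\qquad g_t=(1-\tfrac t2)g+\tfrac t2 f .
\]
Then $S_\xi^tK=\{(x,\lambda):-g_t(x)\le\lambda\le f_t(x),\ x\in D\}$. Since $f$ and $g$ are bounded on $D$ (by the diameter of $K$), say $|f|,|g|\le M$, we get
\[
|f_t-f_{t_0}|\le \tfrac{|t-t_0|}{2}\,|f-g|\le M|t-t_0|,
\]
and similarly for $g_t$, uniformly on $D$.

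\textbf{Hausdorff estimate.} The aim is to show that $d_H(S_\xi^tK,S_\xi^{t_0}K)\le M|t-t_0|$. Take a point $(x,\lambda)\in S_\xi^tK$. Then $x\in D$, and I replace $\lambda$ by its projection $\lambda'$ onto the interval $[-g_{t_0}(x),f_{t_0}(x)]$. This interval is nonempty because $f+g\ge0$ on $D$, so $f_{t_0}+g_{t_0}=f+g\ge 0$.

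Since $\lambda\in[-g_t(x),f_t(x)]$ and each endpoint moves by at most $M|t-t_0|$, we have $|\lambda-\lambda'|\le M|t-t_0|$. Hence $S_\xi^tK$ lies in the $M|t-t_0|$-neighbourhood of $S_\xi^{t_0}K$. Exchanging the roles of $t$ and $t_0$ gives the reverse inclusion, so the Hausdorff distance tends to $0$ as $t\to t_0$.

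\textbf{Alternative via support functions.} Equivalently, one can argue through $h_{S_\xi^tK}(u)=\sup_{x\in D,\ \lambda\in[-g_t,f_t]}(x\cdot u'+\lambda u_n)$. This differs from the $t_0$ version by at most $M|t-t_0|$, uniformly in $u\in S^{n-1}$, which is exactly Hausdorff convergence as defined in the paper.

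\textbf{Main obstacle.} The only delicate point is that the chord description is a genuine convex body for every $t$, with the same projection $D$. This is already noted after (\ref{62}): $f_t$ and $g_t$ are concave and $f_t+g_t=f+g$. Boundary points of $D$, where $f+g$ may vanish, cause no trouble, because the estimate is uniform on all of $D$.
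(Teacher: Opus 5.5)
Your proof is correct and is essentially the paper's argument. The paper translates each chord $L_\xi^y\cap S^t_\xi K$ by $(t_0-t)c_y\xi$ so that it lands in $S^{t_0}_\xi K$, which is exactly your observation that the endpoints move by $\frac{|t-t_0|}{2}|f-g|=|t-t_0||c_y|$, and it then concludes with the same two-sided inclusion. Your explicit constant $M$, which bounds $|c_y|$, is slightly more careful than the paper's $\delta B^n$, which tacitly assumes $|c_y|\le 1$.
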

\begin{proof}
For any $\epsilon>0$ and $0\leq|t-t_0|=\delta<\epsilon$. By Definition \ref{67}, for each $x\in S_\xi^tK$ there is $y\in K|{\xi^\bot}$ such that
\begin{eqnarray*}
 x\in S_\xi^tK\cap L_\xi^y&=&y+((1-t)c_y+[-l_y,l_y])\xi\\
   &=& y+[(1-t_0)c_y-l_y+(t_0-t)c_y,(1-t_0)c_y+l_y+(t_0-t)c_y]\xi \\
   &\subset& y+[(1-t_0)c_y-l_y,(1-t_0)c_y+l_y]\xi+\delta B^n\\
   &\subset&(S_\xi^{t_0}K\cap L_\xi^y)+\delta B^n\\
   &\subset& S_\xi^{t_0}K+\epsilon B^n,
\end{eqnarray*}
that is, $$S_\xi^tK\subset S_\xi^{t_0}K+\epsilon B^n.$$
Similarly, $$S_\xi^{t_0}K\subset S_\xi^{t}K+\epsilon B^n.$$
Therefore,
$$S_\xi^tK\rightarrow S_\xi^{t_0}K,\ \ \ \mathrm{as}\ t\rightarrow t_0.$$
\end{proof}

Finally, we recall the Steiner symmetrization for the $L_p$ polar projection body by Lutwak-Yang-Zhang.
\begin{theorem}\label{59}
(\cite{LYZ00})Let $\xi\in S^{n-1}$, $p>1$ and $K\subset\rn$ be an origin-symmetric convex body of $C^2_+$ class. Then,
$$S_\xi{{\Pi}_p^\circ K}\subset {\Pi}_p^\circ S_\xi K,$$
with equality if and only if the chords of $K$ parallel to $\xi$ have midpoints that are coplanar.
\end{theorem}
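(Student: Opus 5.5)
The plan is to follow the classical Lutwak--Yang--Zhang strategy: write $h_{\Pi_p K}$ as an integral over the projection $K|\xi^\bot$ using the graph representation (\ref{63}), and then use a convexity (perspective-function) inequality to compare chords of ${\Pi}_p^\circ K$ with chords of ${\Pi}_p^\circ S_\xi K$. Decompose $\rn=\xi^\bot\times\mathrm{span}\{\xi\}$ and write $K=\{(z,\lambda):-g(z)\le\lambda\le f(z)\}$ with $f,g$ concave on $K':=K|\xi^\bot$. Since $K$ is $C^2_+$, $f$ and $g$ are $C^2$ in the interior of $K'$.

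\textbf{Step 1: the integral formula.} On the upper boundary point $(z,f(z))$ the outer normal is $(-\nabla f,1)/\sqrt{1+|\nabla f|^2}$. There the support function equals $(f-z\cdot\nabla f)/\sqrt{1+|\nabla f|^2}$, and $d\mathcal H^{n-1}=\sqrt{1+|\nabla f|^2}\,dz$. The lower boundary is treated the same way with $g$. Substituting into (\ref{96}), the square-root factors cancel because of the homogeneity $p+(1-p)=1$, and for $(y,s)\in\xi^\bot\times\rr$ one gets
$$d_{n,p}h^p_{\Pi_pK}(y,s)=\int_{K'}\Phi\big(s-y\cdot\nabla f,\ f-z\cdot\nabla f\big)\,dz+\int_{K'}\Phi\big(-s-y\cdot\nabla g,\ g-z\cdot\nabla g\big)\,dz .$$
Here $\Phi(a,b)=|a|^pb^{1-p}$ for $b>0$; note that $b>0$ because $o\in\operatorname{int}K$. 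The boundary of $K'$ (vertical parts of $\partial K$) contributes nothing, since there $\xi\cdot\nu=0$ and these normals are horizontal. I expect to have to justify this by a limiting argument, or by noting that for $C^2_+$ bodies that set has measure zero. For $S_\xi K$ the same formula holds with $f,g$ both replaced by $\bar f=\tfrac12(f+g)$.

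\textbf{Step 2: the key inequality.} The function $\Phi$ is the perspective of the convex function $|a|^p$. Hence it is jointly convex and positively $1$-homogeneous on $\rr\times(0,\infty)$, and it is strictly convex except along rays. Fix $y$ and let the chord of ${\Pi}_p^\circ K$ over $y$ be $\{y\}\times[-s_2,s_1]$, so that $h_{\Pi_pK}(y,s_1)\le1$ and $h_{\Pi_pK}(y,-s_2)\le1$. Set $\bar s=\tfrac12(s_1+s_2)$. I will pair the upper integrand at $(y,s_1)$ with the lower integrand at $(y,-s_2)$, and the lower integrand at $(y,s_1)$ with the upper integrand at $(y,-s_2)$. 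The averages of the arguments are exactly $(\pm\bar s-y\cdot\nabla\bar f,\ \bar f-z\cdot\nabla\bar f)$, because $\nabla \bar f$ and $\bar f-z\cdot\nabla\bar f$ are linear in $(f,g)$. Convexity of $\Phi$ then gives
$$h^p_{\Pi_pS_\xi K}(y,\bar s)\le\tfrac12\big(h^p_{\Pi_pK}(y,s_1)+h^p_{\Pi_pK}(y,-s_2)\big)\le1 .$$
The same argument applied to $(y,-\bar s)$ gives $h_{\Pi_pS_\xi K}(y,-\bar s)\le1$. Therefore both endpoints $(y,\pm\bar s)$ of the chord of $S_\xi{\Pi}_p^\circ K$ over $y$ lie in ${\Pi}_p^\circ S_\xi K$, and by convexity so does the whole chord. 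This proves the inclusion.

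\textbf{Step 3: equality.} The main obstacle is the equality case. If equality holds, then for every $y$ all the convexity inequalities above are equalities for a.e.\ $z$. Strict convexity of $\Phi$ off rays then forces the paired vectors to be proportional:
$$(s_1-y\cdot\nabla f,\ f-z\cdot\nabla f)\parallel(s_2-y\cdot\nabla g,\ g-z\cdot\nabla g).$$
From this relation, valid for all $y$ with suitable $s_1(y),s_2(y)$, I would extract that $\nabla(f-g)$ is constant and that $(f-g)-z\cdot\nabla(f-g)$ is constant. This means $f-g$ is affine, i.e.\ the midpoints $\tfrac12(f-g)$ of the chords lie in a hyperplane. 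Extracting this cleanly, including handling the case $y=0$ and the degenerate case $a=0$ in the strict-convexity step, is the delicate part. Conversely, if $f-g$ is affine, then $S_\xi K$ is the image of $K$ under a linear shear fixing $\xi^\bot$. Equality then follows from the $SL(n)$-covariance of ${\Pi}_p^\circ$ together with the fact that Steiner symmetrization commutes appropriately with such shears.
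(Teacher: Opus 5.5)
Your proof of the inclusion is correct. It is essentially the Lutwak--Yang--Zhang argument that the paper cites, and which the paper reproduces for the continuous symmetrization in Theorem~\ref{65}. It has the same three ingredients: the representation of $h^p_{\Pi_pK}$ as an integral over $K|\xi^\bot$, the joint convexity of $(a,b)\mapsto|a|^pb^{1-p}$ (exactly the triangle inequality combined with Lemma~\ref{60}), and the chord-endpoint criterion of Lemma~\ref{61}.

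The only difference is cosmetic. The paper uses origin symmetry to write $h^p$ through $f$ alone, (\ref{64}), or through $g$ alone, (\ref{66}), evaluating $(y,\lambda)$ with $f$ and $(y,-s)$ with $g$. You keep both sheets and cross-pair them, so your inclusion does not even use the symmetry of $K$.

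One correction in Step~1. The part of $\partial K$ lying over $\partial(K|\xi^\bot)$ does not drop out because $\xi\cdot\nu=0$; the integrand there is $|y\cdot\nu|^p h_K^{1-p}$, which is not zero. It drops out because, $K$ being $C^2_+$, this set is the preimage under the Gauss map of the great subsphere $S^{n-1}\cap\xi^\bot$. It is therefore $(n-2)$-dimensional and $\mathcal H^{n-1}$-null. For $S_\xi K$ the corresponding set is $\partial(K|\xi^\bot)\times\{0\}$, also null.

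The genuine gap is Step~3, which you leave as a plan. The case $y=0$ that you flag as delicate is in fact the one that closes the argument, and no other $y$ is needed:
\begin{itemize}
\item Since $\Pi_p^\circ K$ is origin-symmetric, its chord over $0$ has $s_1=s_2=\bar s>0$.
\item If $S_\xi\Pi_p^\circ K=\Pi_p^\circ S_\xi K$, then $(0,\bar s)\in\partial\,\Pi_p^\circ S_\xi K$, so your chain of inequalities at $y=0$ is an equality.
\item The integrands are continuous on $\mathrm{int}(K|\xi^\bot)$, so pointwise $\big(\tfrac12\langle f\rangle+\tfrac12\langle g\rangle\big)^{1-p}=\tfrac12\big(\langle f\rangle^{1-p}+\langle g\rangle^{1-p}\big)$.
\item Strict convexity of $t\mapsto t^{1-p}$ (the equality case of Lemma~\ref{60}) gives $\langle f\rangle=\langle g\rangle$. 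Hence $\langle f-g\rangle\equiv0$, and $f-g$ is linear by (\ref{101}).
\end{itemize}
This is exactly how the paper extracts the midpoint condition at the end of the proof of Theorem~\ref{14}. The degenerate case $a=0$ is also no issue: with positive second coordinates, equality in the perspective inequality means exactly $a_1b_2=a_2b_1$.

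For the converse, you need $f-g$ \emph{linear}, not just affine, for the shear to lie in $SL(n)$, because $\Pi_p$ is not translation invariant for $p>1$. Origin symmetry supplies this, since $f(z)-g(z)=f(z)-f(-z)$ is odd. The ``commutation'' you invoke should be made precise as two facts:
\begin{itemize}
\item $S_\xi(AL)=S_\xi L$ for every shear $A$ parallel to $\xi$, since such a shear preserves chord lengths.
\item $\Pi_p^\circ S_\xi K$ is symmetric in $\xi^\bot$, so it is fixed by $S_\xi$.
\end{itemize}
Hence $S_\xi\Pi_p^\circ K=S_\xi(A\Pi_p^\circ K)=S_\xi\Pi_p^\circ S_\xi K=\Pi_p^\circ S_\xi K$.
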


\section{The $L^p$ Projection Centroid Conjecture}
\subsection{Some auxilliary lemmas}

Let $K\in\mathcal{K}^n_o$.
For simplicity, support functions  of the non-standard $L^p$ projection bodies $\tilde{\Pi}_p K$ and  the non-standard $L^p$ centroid bodies $\tilde{\Gamma}_p K$ of are defined by
$$h_{\tilde{\Pi}_p K}^p(x)=\int_{S^{n-1}} |\xi\cdot x|^pdS_{p,K}(\xi),\quad x\in\rn,$$
and
\begin{equation}\label{72}
  h_{\tilde{\Gamma}_p K}^p(x)=\int_{S^{n-1}} \rho^{n+p}_K(\xi)|\xi\cdot x|^pd\xi,\quad x\in\rn,
\end{equation}
respectively.

Note that $\tilde{\Pi}_p K$ is equivalent to ${\Pi}_p K$ up to a constant and $\tilde{\Gamma}_p K$ is equivalent to ${\Gamma}_p K$ up to a constant.

Firstly, we recall a useful notion defined by Lutwak-Yang-Zhang \cite{LYZ00}.

Suppose $A$ is the interior of a convex subset of $\xi^{\perp}=\mathbb{R}^{n-1}$ and $f:A\rightarrow\mathbb{R}$ is a $C^1$ function. Then, $\langle f\rangle:A\rightarrow \mathbb{R}$
is the function defined by
$$\langle f\rangle(x)=f(x)-\nabla f(x)\cdot x,\quad \forall x\in A.$$
Notice that $\langle\cdot\rangle$ is a linear operator; i.e., if $f_1,f_2:A\rightarrow\mathbb R$ and $\lambda_1,\lambda_2\in\mathbb R$, then
$$
\langle\lambda_1f_1+\lambda_2f_2\rangle=\lambda_1\langle f_1\rangle+\lambda_2\langle f_2\rangle.
$$
And we have
\begin{equation}\label{101}
  \langle f\rangle(x)=0,~\forall x\in A\Longrightarrow f\ \mathrm{is\ linear\ on}\ A.
\end{equation}
If $K$ is a convex body defined by (\ref{63}), then we can get
\begin{equation}\label{71}
  \langle f\rangle(x)=h_K(-\nabla f(x),1)\ \ \ \mathrm{and}\ \ \ \langle g\rangle(x)=h_K(-\nabla g(x),-1)
\end{equation}
for all $x\in\mathrm{int}(K|{\xi^\bot})$. Further, if $K$ is orgin-symmetric, then we have
\begin{equation}\label{88}
  f(x)=g(-x),\ g(x)=f(-x),\ \langle f\rangle(x)=\langle g\rangle(-x),\ \langle g\rangle(x)=\langle f\rangle(-x),
\end{equation}
for all $x\in\mathrm{int}(K|{\xi^\bot})$.

The following lemmas are crucial in our paper.
\begin{lemma}\label{60}
(\cite{LYZ00})If $a,b\geq 0$ and $c,d>0$, then for $p>1$
$$(a+b)^p(c+d)^{1-p}\leq a^pc^{1-p}+b^pd^{1-p},$$
with equality if and only if $ad=bc$.
\end{lemma}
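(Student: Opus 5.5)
The plan is to derive the inequality from Hölder's inequality with exponents $p$ and $p'=p/(p-1)$, and then read off the equality case from the equality case of Hölder. Since $c,d>0$, we may split each of $a,b$ into a product of two factors:
$$a+b = \big(a c^{(1-p)/p}\big)\, c^{(p-1)/p} + \big(b d^{(1-p)/p}\big)\, d^{(p-1)/p}.$$
Applying the discrete Hölder inequality to the two-term sum with exponents $p$ and $p/(p-1)$ gives
$$a+b \le \big(a^p c^{1-p} + b^p d^{1-p}\big)^{1/p}\,(c+d)^{(p-1)/p}.$$
Raising both sides to the power $p$ (both sides are nonnegative) and dividing by $(c+d)^{p-1}>0$ yields exactly
$$(a+b)^p(c+d)^{1-p}\le a^pc^{1-p}+b^pd^{1-p}.$$

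An equivalent viewpoint, which could replace the Hölder step, is the following. The function $\varphi(a,c)=a^pc^{1-p}$ on $[0,\infty)\times(0,\infty)$ is the perspective of the convex function $a\mapsto a^p$. Hence it is convex and positively $1$-homogeneous, and such functions are subadditive: $\varphi(a+b,c+d)\le\varphi(a,c)+\varphi(b,d)$. I would use the Hölder route, since its equality case is the most standard.

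For the equality case I would distinguish two situations. If $a=b=0$, both sides vanish, and $ad=bc=0$ holds, so the characterization is consistent. Otherwise the vector $(a c^{(1-p)/p}, b d^{(1-p)/p})$ is nonzero. Equality in Hölder (with $1<p<\infty$) holds if and only if the vectors $\big(a^pc^{1-p},\,b^pd^{1-p}\big)$ and $(c,d)$ are proportional, that is,
$$\frac{a^p c^{1-p}}{c}=\frac{b^p d^{1-p}}{d}\iff \Big(\frac{a}{c}\Big)^p=\Big(\frac{b}{d}\Big)^p\iff \frac ac=\frac bd \iff ad=bc,$$
where we use that $a,b\ge0$ and $c,d>0$ to take $p$-th roots. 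Conversely, if $ad=bc$, write $a=\lambda c$ and $b=\lambda d$ with $\lambda\ge 0$; a direct substitution shows that both sides equal $\lambda^p(c+d)$.

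I do not expect a genuine obstacle here. The only point needing care is the equality case when one of $a,b$ is zero. For instance, if $a=0<b$, then $ad=0\ne bc$, and strict inequality must still hold. This follows from Hölder's equality condition, since $(0,b^pd^{1-p})$ is not proportional to $(c,d)$ when $c>0$.
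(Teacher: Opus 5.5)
Your proof is correct, including the equality case. Hölder's inequality with the splitting $a=(ac^{(1-p)/p})\,c^{(p-1)/p}$ and $b=(bd^{(1-p)/p})\,d^{(p-1)/p}$ gives the inequality. Since $c,d>0$, the Hölder equality condition (that $(a^pc^{1-p},b^pd^{1-p})$ is a nonnegative multiple of $(c,d)$) is exactly $ad=bc$, so your separate treatment of $a=b=0$ is not even needed.

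The paper gives no proof of this lemma and simply quotes it from \cite{LYZ00}. Your Hölder argument is the standard way to establish it. An equivalent route is Jensen's inequality for the strictly convex function $t\mapsto t^p$, applied to $a/c$ and $b/d$ with weights $c/(c+d)$ and $d/(c+d)$; there the equality case follows immediately from strict convexity.
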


For the analogy of Lemma 9 in \cite{LYZ00}, one has:
\begin{lemma}\label{61}
Let $t\in(0,2)$, $\xi\in S^{n-1}$ and $K,L\subset\xi^\bot\times \mathrm{span}\{\xi\}$ be convex bodies of $\rn$. Then,
$$S^t_{\xi}K^\circ\subset L^\circ$$
if and only if
$$|(x,\lambda)|_{K^\circ}=|(x,-s)|_{K^\circ}=1,\ \mathrm{with}\ \lambda\ne-s\Longrightarrow |(x,(1-\frac{t}{2})\lambda+\frac{t}{2}s)|_{L^\circ}\leq 1~\&~|(x,(\frac{t}{2}-1)s-\frac{t}{2}\lambda)|_{L^\circ}\leq 1.$$
In addition, $S^t_{\xi}K^\circ=L^\circ$ if and only if
$$|(x,\lambda)|_{K^\circ}=|(x,-s)|_{K^\circ}=1,\ \mathrm{with}\ \lambda\ne-s\Longrightarrow |(x,(1-\frac{t}{2})\lambda+\frac{t}{2}s)|_{L^\circ}= 1~\&~|(x,(\frac{t}{2}-1)s-\frac{t}{2}\lambda)|_{L^\circ}=1.$$
\end{lemma}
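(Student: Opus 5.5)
The plan is to reduce both statements to a chord-by-chord comparison using the representation (\ref{63})--(\ref{62}), applied to $K^\circ$ instead of $K$. Since $K^\circ$ is a convex body, write
$$K^\circ=\Big\{(x,\lambda):-g(x)\leq\lambda\leq f(x),\ x\in K^\circ|\xi^\bot\Big\}$$
with $f,g$ concave. The first step is to identify the hypothesis in the lemma. The conditions $|(x,\lambda)|_{K^\circ}=|(x,-s)|_{K^\circ}=1$ with $\lambda\neq -s$ say that $(x,\lambda)$ and $(x,-s)$ are the two distinct endpoints of the chord of $K^\circ$ over $x$. So either $\lambda=f(x),\,s=g(x)$, or $\lambda=-g(x),\,s=-f(x)$, and in either case $f(x)>-g(x)$.

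The second step is to compute the endpoints of the corresponding chord of $S^t_\xi K^\circ$. By (\ref{62}), in the first case the two numbers $(1-\frac t2)\lambda+\frac t2 s$ and $(\frac t2-1)s-\frac t2\lambda$ are exactly $(1-\frac t2)f(x)+\frac t2 g(x)$ and $-\big((1-\frac t2)g(x)+\frac t2 f(x)\big)$, i.e. the top and bottom of that chord. In the swapped case the same two numbers appear with their roles exchanged. Hence the condition in the lemma says precisely: for every $x$ over which $K^\circ$ has a nondegenerate chord, both endpoints of the chord of $S^t_\xi K^\circ$ over $x$ lie in $L^\circ$ (first statement), respectively lie on $\partial L^\circ$ (second statement).

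For the inclusion statement, the forward direction is immediate, since these endpoints belong to $S^t_\xi K^\circ$. For the converse, convexity of $L^\circ$ shows that each nondegenerate chord of $S^t_\xi K^\circ$ lies in $L^\circ$ once its endpoints do. It remains to handle degenerate chords, where $f(x)=-g(x)$; these can only occur for $x\in\partial(K^\circ|\xi^\bot)$. Since $K^\circ$ has nonempty interior, every $x\in\mathrm{int}(K^\circ|\xi^\bot)$ has a nondegenerate chord. Therefore $S^t_\xi K^\circ$, a convex body, is the closure of the union of its nondegenerate chords, and closedness of $L^\circ$ finishes the argument.

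For the equality statement, the forward direction is clear: the chord endpoints lie on $\partial(S^t_\xi K^\circ)=\partial L^\circ$, i.e. have gauge $1$. The converse is the step needing the most care. First, the inclusion $S^t_\xi K^\circ\subset L^\circ$ holds by the first part. Next, every boundary point of $S^t_\xi K^\circ$ is either an endpoint of a nondegenerate chord or a limit of such endpoints; points on vertical boundary segments over $\partial(K^\circ|\xi^\bot)$ are limits of top or bottom endpoints over interior $x$. Hence $\partial(S^t_\xi K^\circ)\subset\partial L^\circ$ by closedness. Finally, the origin lies in the interior of $S^t_\xi K^\circ$, because $f(0),g(0)>0$ gives $(1-\frac t2)f(0)+\frac t2g(0)>0$ and similarly for the lower function. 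So for each $u\in S^{n-1}$ the point $\rho_{S^t_\xi K^\circ}(u)u$ lies on $\partial L^\circ$, which forces $\rho_{L^\circ}(u)=\rho_{S^t_\xi K^\circ}(u)$. Thus the two bodies coincide. I expect the main obstacle to be this density/closure argument at boundary points over $\partial(K^\circ|\xi^\bot)$, where the chord hypothesis gives no direct information.
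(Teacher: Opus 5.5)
Your strategy is the paper's: describe $K^\circ$ by the concave functions $f,g$, use (\ref{62}) to read $(x,(1-\frac t2)\lambda+\frac t2 s)$ and $(x,(\frac t2-1)s-\frac t2\lambda)$ as chord endpoints of $S^t_\xi K^\circ$, and argue chord by chord. The inclusion statement goes through this way. Your first step, however, is not accurate. It claims that $|(x,\lambda)|_{K^\circ}=|(x,-s)|_{K^\circ}=1$ with $\lambda\neq-s$ forces $\{\lambda,-s\}=\{f(x),-g(x)\}$. This fails when $x\in\partial(K^\circ|\xi^\bot)$ and the chord over $x$ is nondegenerate. Then the whole segment $\{x\}\times[-g(x),f(x)]$ lies in a vertical supporting hyperplane of $K^\circ$, so every pair of distinct points on it satisfies the hypothesis. (The paper's proof makes the same identification.) For the inclusion this is harmless: the produced points still lie on the chord of $S^t_\xi K^\circ$ over $x$, hence in $S^t_\xi K^\circ\subset L^\circ$, and the converse only uses genuine endpoint pairs.

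For the equality statement there is a genuine gap, at exactly the step you flagged. Points of vertical boundary segments over $\partial(K^\circ|\xi^\bot)$ are \emph{not} in general limits of top or bottom endpoints over interior $x$. Take $n=2$, $\xi=e_2$ and $K^\circ=[-1,1]^2$, so $f=g\equiv1$ and $S^t_\xi K^\circ=K^\circ$. The boundary point $(1,0)$ is not such a limit. Your reformulated condition is in fact insufficient: $L^\circ=[-2,2]\times[-1,1]$ has every chord endpoint $(x,\pm1)$ on its boundary, yet $L^\circ\neq S^t_\xi K^\circ$.

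The fix is to use the hypothesis as literally stated. Take $x\in\partial(K^\circ|\xi^\bot)$ with $f(x)>-g(x)$, and let $\lambda$ and $-s$ range over distinct points of $[-g(x),f(x)]$. The values $(1-\frac t2)\lambda+\frac t2 s$ then fill the whole chord $[-(1-\frac t2)g(x)-\frac t2 f(x),\,(1-\frac t2)f(x)+\frac t2 g(x)]$ of $S^t_\xi K^\circ$ over $x$. The only exception is the value $0$ when $t=1$, which closedness of $\partial L^\circ$ handles. So the entire vertical boundary segment lies on $\partial L^\circ$ directly, with no limit needed.

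Only points over $\partial(K^\circ|\xi^\bot)$ with a degenerate chord require a limiting argument, and there it is valid. Since $f,g$ are concave and upper semicontinuous, they are continuous along a segment from an interior point to $x$. Hence $(x,(1-\frac t2)f(x)+\frac t2 g(x))$ is a limit of top endpoints over interior points. This gives $\partial(S^t_\xi K^\circ)\subset\partial L^\circ$, and your radial-function argument then concludes.

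The same observation repairs your forward direction of the equality statement. The points produced from a vertical segment are not chord endpoints, but they lie on $\partial(S^t_\xi K^\circ)$, because $S^t_\xi K^\circ|\xi^\bot=K^\circ|\xi^\bot$.
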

\begin{proof}
Sufficiency. Denote $K^\circ,S_\xi^tK^\circ$ by
$$K^\circ=\Big\{(x,\lambda):-g(x)\leq\lambda\leq f(x),x\in K|\xi^\bot\Big\}$$
and
$$S_\xi^tK^\circ=\Big\{(x,\lambda):-\Big((1-\frac{t}{2})g(x)+\frac{t}{2}f(x)\Big)\leq\lambda\leq (1-\frac{t}{2})f(x)+\frac{t}{2}g(x),x\in K|\xi^\bot\Big\}.$$
If $|(x,\lambda)|_{K^\circ}=|(x,-s)|_{K^\circ}=1$, then $\lambda=f(x),s=g(x)$ or $\lambda=-g(x),s=-f(x)$. Thus, $|(x,(1-\frac{t}{2})f(x)+\frac{t}{2}g(x))|_{L^\circ}\leq 1~\&~|(x,(\frac{t}{2}-1)g(x)-\frac{t}{2}f(x))|_{L^\circ}\leq 1$, so  the boundary points $(x,(1-\frac{t}{2})f+\frac{t}{2}g),(x,(\frac{t}{2}-1)g-\frac{t}{2}f)$ of $S_\xi^tK^\circ$ is contained in $L^\circ$. Thus, by the convexity of $S_\xi^tK^\circ$, the chord from $(x,(1-\frac{t}{2})f+\frac{t}{2}g)$ to $(x,(\frac{t}{2}-1)g-\frac{t}{2}f)$ parallel to $\xi$ is contained in $L^\circ$. This implies
$$S^t_{\xi}K^\circ\subset L^\circ.$$

Necessity. Let $|(x,\lambda)|_{K^\circ}=|(x,-s)|_{K^\circ}=1$. Since $y_1=(x,(\frac{t}{2}-1)s-\frac{t}{2}\lambda),y_2=(x,(1-\frac{t}{2})\lambda+\frac{t}{2}s)$ are boundary points of $S_\xi^tK^\circ$, then
$$y_1,y_2\in S^t_{\xi}K^\circ\subset L^\circ,$$
therefore, getting the desire.

Equality. $S^t_{\xi}K^\circ=L^\circ$ if and only if above $y_1,y_2\in\partial S^t_{\xi}K^\circ=\partial L^\circ$ if and only if $|y_1|_{L^\circ}=|y_2|_{L^\circ}=1$.
\end{proof}
\begin{lemma}\label{79}
(\cite{LYZ00})Let $K$ be an origin-symmetric convex body of class $C^2_+$, given by
\begin{equation*}
  K=\Big\{(x,\lambda)\in\rn:-g(x)\leq\lambda\leq f(x),x\in K|\xi^\bot\Big\}.
\end{equation*}
Then for each $(y,\lambda)\in\xi^{\perp}\times\mathrm{span}\{\xi\}$,
\begin{equation}\label{64}
  h^p_{\tilde{\Pi}_pK}(y,\lambda)=2\int_{\mathrm{int}(K|\xi^\bot)}|\lambda-y\cdot\nabla f(x)|^p\langle f\rangle^{1-p}(x)dx,
\end{equation}
and
\begin{equation}\label{66}
h^p_{\tilde{\Pi}_pK}(y,\lambda)=2\int_{\mathrm{int}(K|\xi^\bot)}|\lambda+y\cdot\nabla g(x)|^p\langle g\rangle^{1-p}(x)dx.
\end{equation}
\end{lemma}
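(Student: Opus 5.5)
The plan is to compute $h^p_{\tilde{\Pi}_pK}(y,\lambda)$ directly from the definition (\ref{96}) of $S_{p,K}$, writing the boundary integral as two graph integrals over $\mathrm{int}(K|\xi^\bot)$ and then using origin symmetry to identify them.

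First, by (\ref{96}) and the definition of $\tilde{\Pi}_p$,
$$h^p_{\tilde{\Pi}_pK}(y,\lambda)=\int_{\partial K}\big|(y,\lambda)\cdot\nu_K(z)\big|^p\,h_K^{1-p}(\nu_K(z))\,d\mathcal{H}^{n-1}(z).$$
Next I split $\partial K$ into three parts:
\begin{itemize}
\item the upper graph $\{(x,f(x)):x\in\mathrm{int}(K|\xi^\bot)\}$;
\item the lower graph $\{(x,-g(x)):x\in\mathrm{int}(K|\xi^\bot)\}$;
\item the set of points lying over $\partial(K|\xi^\bot)$.
\end{itemize}
Because $K$ is of class $C^2_+$, the third set is the $(n-2)$-dimensional "shadow boundary", where the normal is horizontal. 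It has $\mathcal{H}^{n-1}$-measure zero and contributes nothing. On the two open graphs, $f$ and $g$ are $C^1$ (indeed $C^2$), so the parametrization is legitimate.

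On the upper graph, the outer unit normal is $\nu=(-\nabla f(x),1)/\sqrt{1+|\nabla f(x)|^2}$ and $d\mathcal{H}^{n-1}=\sqrt{1+|\nabla f(x)|^2}\,dx$. By homogeneity of $h_K$ and (\ref{71}), $h_K(\nu)=\langle f\rangle(x)\,(1+|\nabla f|^2)^{-1/2}$. Moreover $|(y,\lambda)\cdot\nu|^p=|\lambda-y\cdot\nabla f(x)|^p(1+|\nabla f|^2)^{-p/2}$. The powers of $1+|\nabla f|^2$ then cancel, since $-\tfrac p2-\tfrac{1-p}{2}+\tfrac12=0$. The upper contribution is therefore
$$\int_{\mathrm{int}(K|\xi^\bot)}|\lambda-y\cdot\nabla f(x)|^p\langle f\rangle^{1-p}(x)\,dx.$$
The same computation on the lower graph, with normal proportional to $(-\nabla g,-1)$ and $h_K(-\nabla g,-1)=\langle g\rangle$ from (\ref{71}), gives
$$\int_{\mathrm{int}(K|\xi^\bot)}|\lambda+y\cdot\nabla g(x)|^p\langle g\rangle^{1-p}(x)\,dx.$$

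Finally I use origin symmetry through (\ref{88}). From $g(x)=f(-x)$ we get $\nabla g(-x)=-\nabla f(x)$ and $\langle g\rangle(-x)=\langle f\rangle(x)$. The domain $\mathrm{int}(K|\xi^\bot)$ is symmetric because $K$ is. So the change of variables $x\mapsto -x$ transforms the lower integral exactly into the upper one. Hence the sum equals twice either integral, which gives both (\ref{64}) and (\ref{66}).

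The only delicate point is justifying that the shadow boundary is negligible and that the graph parametrization is valid. For a $C^2_+$ body this is standard, since the set where $\nu\cdot\xi=0$ is a $C^1$ submanifold of dimension $n-2$. I do not expect a real obstacle here; the rest is the cancellation computation above.
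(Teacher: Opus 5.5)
Your proposal is correct and is the standard argument behind this result, which the paper does not prove but simply cites from \cite{LYZ00}. You push $S_{p,K}$ back to $\partial K$, parametrize the upper and lower halves as graphs over $\mathrm{int}(K|\xi^\bot)$ (where the powers of $\sqrt{1+|\nabla f|^2}$ cancel), discard the $\mathcal{H}^{n-1}$-null set over $\partial(K|\xi^\bot)$ --- the same observation the paper uses in proving Lemma \ref{Steinerformula} --- and identify the two integrals via (\ref{88}) and $x\mapsto -x$; your normals, the identity $h_K(\nu)=\langle f\rangle(x)/\sqrt{1+|\nabla f(x)|^2}$ and its analogue for $g$, and the symmetry relations $\nabla g(-x)=-\nabla f(x)$, $\langle g\rangle(-x)=\langle f\rangle(x)$ all check out.
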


\begin{lemma}\label{Steinerformula}
Let $K$ be an origin-symmetric convex body of class $C^2_+$, given by
\begin{equation*}
  K=\Big\{(x,\lambda)\in\rn:-g(x)\leq\lambda\leq f(x),x\in K|\xi^\bot\Big\}.
\end{equation*}
and
\begin{equation*}
  S_\xi^tK=\Big\{(x,\lambda):-\Big((1-\frac{t}{2})g(x)+\frac{t}{2}f(x)\Big)\leq\lambda\leq (1-\frac{t}{2})f(x)+\frac{t}{2}g(x),x\in K|\xi^\bot\Big\},
\end{equation*}
for $t\in[0,2]$.

Then for each $(y,\lambda)\in\xi^{\perp}\times\mathrm{span}\{\xi\}$,
\begin{equation}\label{64S}
  h^p_{\tilde{\Pi}_pS_\xi^tK}(y,\lambda)=2\int_{\mathrm{int}(K|\xi^\bot)}\vline\lambda-y\cdot\left(\nabla (1-\frac{t}{2})f(x)+\frac{t}{2}g(x)\right)\vline^p\langle (1-\frac{t}{2})f+\frac{t}{2}g\rangle^{1-p}(x)dx,
\end{equation}
and
\begin{equation}\label{66S}
h^p_{\tilde{\Pi}_pS_\xi^tK}(y,\lambda)=2\int_{\mathrm{int}(K|\xi^\bot)}\vline\lambda+y\cdot\nabla \left((1-\frac{t}{2})g(x)+\frac{t}{2}f(x)\right)\vline^p\langle (1-\frac{t}{2})g+\frac{t}{2}f\rangle^{1-p}(x)dx.
\end{equation}
\end{lemma}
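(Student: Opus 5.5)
The plan is to apply Lemma \ref{79} to the body $S_\xi^tK$ in place of $K$. The first step is to record that $S_\xi^tK$ has exactly the form required there. By (\ref{62}) it is described by the pair of functions
$$f_t=(1-\tfrac t2)f+\tfrac t2 g,\qquad g_t=(1-\tfrac t2)g+\tfrac t2 f$$
on the same projection $K|\xi^\bot=S_\xi^tK|\xi^\bot$. Both $f_t$ and $g_t$ are concave, and we showed after (\ref{62}) that $S_\xi^tK$ is origin-symmetric, i.e.\ $g_t(x)=f_t(-x)$. Since $f,g$ are $C^2$ on $\mathrm{int}(K|\xi^\bot)$, so are $f_t$ and $g_t$. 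Moreover $\langle f_t\rangle=(1-\tfrac t2)\langle f\rangle+\tfrac t2\langle g\rangle$ by linearity of $\langle\cdot\rangle$. By (\ref{71}) and $o\in\mathrm{int}K$ this quantity is strictly positive, so the factor $\langle f_t\rangle^{1-p}$ in (\ref{64S}) makes sense.

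The one subtlety is that $S_\xi^tK$ need not be of class $C^2_+$: averaging $f$ and $g$ can flatten curvature. So rather than quote Lemma \ref{79} verbatim, I will rerun its proof and check that it only uses that the upper and lower boundary parts are $C^1$ graphs over $\mathrm{int}(K|\xi^\bot)$, with the boundary over $\partial(K|\xi^\bot)$ being $\mathcal H^{n-1}$-null for the relevant integrand.

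The direct computation goes as follows. Write $h^p_{\tilde\Pi_pS^t_\xi K}(y,\lambda)$ via (\ref{96}) as an integral over $\partial S_\xi^tK$ of $|\nu\cdot(y,\lambda)|^p h^{1-p}(\nu)\,d\mathcal H^{n-1}$. On the upper graph $\lambda=f_t(x)$:
\begin{itemize}
\item the normal is $\nu=(-\nabla f_t,1)/\sqrt{1+|\nabla f_t|^2}$;
\item the support function there is $h(\nu)=\langle f_t\rangle/\sqrt{1+|\nabla f_t|^2}$, by (\ref{71}) and homogeneity;
\item the area element is $d\mathcal H^{n-1}=\sqrt{1+|\nabla f_t|^2}\,dx$.
\end{itemize}
The powers of $\sqrt{1+|\nabla f_t|^2}$ cancel ($-p+(p-1)+1=0$), leaving $|\lambda-y\cdot\nabla f_t|^p\langle f_t\rangle^{1-p}\,dx$.

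The lower graph gives the analogous term with $g_t$, namely $|\lambda+y\cdot\nabla g_t|^p\langle g_t\rangle^{1-p}$. Using $g_t(x)=f_t(-x)$, hence $\nabla g_t(x)=-\nabla f_t(-x)$ and $\langle g_t\rangle(x)=\langle f_t\rangle(-x)$ as in (\ref{88}), the substitution $x\mapsto -x$ turns this term into the upper one. This produces the factor $2$ and yields (\ref{64S}); the symmetric argument yields (\ref{66S}).

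The main obstacle is justifying that the vertical part of $\partial S^t_\xi K$ over $\partial(K|\xi^\bot)$ contributes nothing. There the normal lies in $\xi^\bot$, so its contribution is not obviously zero. I would handle this as in \cite{LYZ00}: the same vertical pieces appear for $K$ itself, and $K$ is $C^2_+$, so $K$ is strictly convex with the $C^1$ graphs $f,g$ meeting along $\partial(K|\xi^\bot)$. Hence $l_y=0$ there, and $S_\xi^tK$ has no vertical segments, so this set has $\mathcal H^{n-1}$-measure zero.
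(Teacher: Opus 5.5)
Your proposal is correct and follows essentially the paper's proof. You redo the graph computation behind Lemma \ref{79} for $S^t_\xi K$ using $f_t,g_t$ on $\mathrm{int}(K|\xi^\bot)$, use origin-symmetry for the factor $2$, and show that the part of $\partial S^t_\xi K$ over $\partial(K|\xi^\bot)$ is $\mathcal H^{n-1}$-null. One small correction: nothing actually flattens in the interior, since $-\nabla^2 f_t$ is a convex combination of positive definite Hessians, as the paper notes.

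For the null-set step, the paper observes that this set is the image of the $(n-2)$-dimensional shadow boundary $\{(x,c_x):x\in\partial(K|\xi^\bot)\}$ of $K$ under the linear map $(x,\lambda)\mapsto(x,(1-t)\lambda)$. Your ``no vertical segments'' remark also works but needs one more line. The set is then a graph over $\partial(K|\xi^\bot)$ inside the cylinder $\partial(K|\xi^\bot)\times\mathrm{span}\{\xi\}$, and hence it is null by Fubini.
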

\begin{proof}

Let $t\in[0,2]$ and $\xi\in S^{n-1}$.
Since $K$ is of $C^2_+$, there are $C^2$ concave functions $f,g:K|\xi^\bot\rightarrow\mathbb{R}$ with positive definite Hessians $-\nabla^2 f,-\nabla^2g$ on $\mathrm{int}(K|\xi^\bot)$ such that
\begin{equation*}
  K=\Big\{(x,\lambda):-g(x)\leq\lambda\leq f(x),x\in K|\xi^\bot\Big\},
\end{equation*}
and
\begin{equation*}
  S_\xi^tK=\Big\{(x,\lambda):-\Big((1-\frac{t}{2})g(x)+\frac{t}{2}f(x)\Big)\leq\lambda\leq (1-\frac{t}{2})f(x)+\frac{t}{2}g(x),x\in K|\xi^\bot\Big\}.
\end{equation*}
Then, the functions $f_1=(1-\frac{t}{2})f+\frac{t}{2}g$ and $g_1=(1-\frac{t}{2})g+\frac{t}{2}f$ are of $C^2$ with positive definite Hessians $-\nabla^2 f_1,-\nabla^2g_1$ on $\mathrm{int}(K|\xi^\bot)$ for $t\in[0,2]$.

Thus, the graph surface of $f_1,g_1$ are of $C^2_+$ on $\mathrm{int}(K|\xi^\bot)$.

On the another hand, by Definition \ref{67}, since the $(n-2)$-dimensional compact hypersurface $C=\big\{(x,f(x)):x\in \partial (K|\xi^\bot)\big\}=\big\{(x,c_x):x\in \partial (K|\xi^\bot)\big\}$ is of $C^2$, then one has $\mathcal{H}^{n-1}(C)=0$.

And note that
$C_t=\big\{(x,f_1(x)):x\in \partial (K|\xi^\bot)\big\}=\big\{(x,(1-t)c_x):x\in \partial (K|\xi^\bot)\big\}$ is the linear transformation of $C$, so we have $\mathcal{H}^{n-1}(C_t)=0$.

Thus, together with $(S^t_\xi K)|{\xi^\bot}=K|{\xi^\bot}$ and $S^t_\xi$ is origin-symmetric, formulae (\ref{64S}) and (\ref{66S}) hold for $S^t_\xi K$.
\end{proof}

\subsection{The monotonicity of $|(\tilde{\Pi}_p^\circ S_\xi^t K)_y|$ with respect to $t$}
Firstly, We extend the inclusion relation in Theorem \ref{59} to the continuous Steiner symmetrization:

\begin{theorem}\label{65}
Let $t\in(0,2), \xi\in S^{n-1},p>1$ and $K$ be an origin-symmetric convex body of $C^2_+$. Then,
$$S_\xi^t{\tilde{\Pi}_p^\circ K}\subset \tilde{\Pi}_p^\circ S_\xi^t K.$$
\end{theorem}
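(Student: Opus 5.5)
We follow the Lutwak--Yang--Zhang proof of Theorem \ref{59}, with the Steiner midpoint replaced by the weighted combination of Lemma \ref{61}. Write $K=\{(x,\lambda):-g(x)\leq\lambda\leq f(x)\}$ with $f,g$ concave, $C^2$, with negative definite Hessians. Set $f_1=(1-\frac t2)f+\frac t2 g$ and $g_1=(1-\frac t2)g+\frac t2 f$, so that $S^t_\xi K$ is bounded by $f_1$ and $-g_1$. By Lemma \ref{61}, applied with $K$ replaced by $\tilde\Pi_pK$ and $L$ by $\tilde\Pi_pS^t_\xi K$, it suffices to prove the following. If $|(y,\lambda)|_{\tilde\Pi_p^\circ K}=|(y,-s)|_{\tilde\Pi_p^\circ K}=1$, then
$$h_{\tilde\Pi_pS^t_\xi K}\big(y,(1-\tfrac t2)\lambda+\tfrac t2 s\big)\le 1\quad\text{and}\quad h_{\tilde\Pi_pS^t_\xi K}\big(y,(\tfrac t2-1)s-\tfrac t2\lambda\big)\le1 .$$
Here we use $|\cdot|_{\tilde\Pi_p^\circ M}=h_{\tilde\Pi_pM}$. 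The two hypotheses read $h^p_{\tilde\Pi_pK}(y,\lambda)=1$ and $h^p_{\tilde\Pi_pK}(y,-s)=1$.

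For the first inequality, express $h^p_{\tilde\Pi_pK}(y,\lambda)$ by formula (\ref{64}) in terms of $f$. Express $h^p_{\tilde\Pi_pK}(y,-s)$ by (\ref{66}) in terms of $g$, which gives integrand $|{-s}+y\cdot\nabla g|^p\langle g\rangle^{1-p}=|s-y\cdot\nabla g|^p\langle g\rangle^{1-p}$. Then
$$1=(1-\tfrac t2)\,h^p_{\tilde\Pi_pK}(y,\lambda)+\tfrac t2\, h^p_{\tilde\Pi_pK}(y,-s)$$
equals
$$2\int\Big[\big|(1-\tfrac t2)(\lambda-y\cdot\nabla f)\big|^p\big((1-\tfrac t2)\langle f\rangle\big)^{1-p}+\big|\tfrac t2(s-y\cdot\nabla g)\big|^p\big(\tfrac t2\langle g\rangle\big)^{1-p}\Big]dx .$$
Apply Lemma \ref{60} pointwise with $a=|(1-\frac t2)(\lambda-y\cdot\nabla f)|$, $b=|\frac t2(s-y\cdot\nabla g)|$, $c=(1-\frac t2)\langle f\rangle>0$ and $d=\frac t2\langle g\rangle>0$. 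Positivity of $c,d$ holds because, by (\ref{71}), $\langle f\rangle,\langle g\rangle$ are values of $h_K$ at nonzero vectors and $o\in\mathrm{int}K$. Using $|A|+|B|\ge|A+B|$ and the linearity of $\langle\cdot\rangle$, so that $c+d=\langle f_1\rangle$, the integrand is bounded below by
$$\big|(1-\tfrac t2)\lambda+\tfrac t2 s-y\cdot\nabla f_1\big|^p\langle f_1\rangle^{1-p}.$$
By (\ref{64S}) this integral equals $h^p_{\tilde\Pi_pS^t_\xi K}(y,(1-\frac t2)\lambda+\frac t2 s)$, which gives the first bound.

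The second inequality is handled the same way. Represent $h^p_{\tilde\Pi_pS^t_\xi K}$ through $g_1$ via (\ref{66S}), and write the hypotheses as $\frac t2 h^p(y,\lambda)+(1-\frac t2)h^p(y,-s)$. Here the $\lambda$ term is expressed through $f$, the $s$ term through $g$, and they are combined as before. Alternatively, one can note that $S^t_\xi K$ and $\tilde\Pi_p$ of it are origin-symmetric. The second target point is then the negative of the first point evaluated at $(-y)$, after the relabeling $\lambda\leftrightarrow -s$ allowed by (\ref{88}), so the second bound follows from the first.

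The main obstacle is bookkeeping rather than analysis. We must check that the signs and weights line up so that each pointwise inequality closes exactly against formula (\ref{64S}) or (\ref{66S}). This requires pairing the $f$ representation for one hypothesis with the $g$ representation for the other, in the right proportions $1-\frac t2$ and $\frac t2$. We also need $t\in(0,2)$ so that both weights are positive and Lemma \ref{60} applies. Finally, both representation formulas must be valid for $S^t_\xi K$, which is exactly the content of Lemma \ref{Steinerformula}.
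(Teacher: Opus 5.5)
Your proof is correct and follows essentially the same route as the paper: reduce via Lemma \ref{61}, express the two hypotheses through (\ref{64}) and (\ref{66}) with weights $1-\frac t2$ and $\frac t2$, apply the triangle inequality and Lemma \ref{60} pointwise, and close with (\ref{64S}). For the second point the paper stays with the $f_1$-representation, pairing $s$ with $f$ and $\lambda$ with $g$; your use of (\ref{66S}) is equivalent.

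There is one small slip, in your optional symmetry remark only. The second bound is simply the first one applied with $(\lambda,s)$ replaced by $(-s,-\lambda)$, because the hypothesis pair is invariant under that swap. The extra passage to $-y$ and the negation, as you wrote them, land on $(y,(1-\frac t2)s+\frac t2\lambda)$ instead of the second target point. Your direct computation via (\ref{66S}) is fine as it stands.
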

\begin{proof}
Suppose $|(y,\lambda)|_{\tilde{\Pi}_p^\circ K}=|(y,-s)|_{\tilde{\Pi}_p^\circ K}=1,\ \mathrm{with}\ \lambda\ne-s$ and $0<t<2$, and note that $h^p_{\tilde{\Pi}_p K}(y,\lambda)=h^p_{\tilde{\Pi}_p K }(y,-s)=1$ and $\langle f\rangle,\langle g\rangle>0$.
Using formula (\ref{64}), the triangle inequality and Lemma \ref{60}, we compute that
{\small
\begin{eqnarray}
\nonumber   |(y,(1-\frac{t}{2})\lambda+\frac{t}{2}s)|_{\tilde{\Pi}_p^\circ S_\xi^t K}^p &=& h^p_{\tilde{\Pi}_p S_\xi^tK}(y,(1-\frac{t}{2})\lambda+\frac{t}{2}s)\\
\nonumber  &=&2\int_{\mathrm{int}(K|\xi^\bot)}|(1-\frac{t}{2})\lambda+\frac{t}{2}s-y\cdot\nabla \big((1-\frac{t}{2})f(x)+\frac{t}{2}g(x)\big)|^p\langle (1-\frac{t}{2})f+\frac{t}{2}g\rangle^{1-p}(x)dx\\
\nonumber   &\leq& 2\int_{\mathrm{int}(K|\xi^\bot)}\big(|(1-\frac{t}{2})(\lambda-y\cdot\nabla f(x))|+|\frac{t}{2}(s-y\cdot\nabla g(x))|\big)^p\big((1-\frac{t}{2})\langle f\rangle(x)+\frac{t}{2}\langle g\rangle(x)\big)^{1-p}dx \\
   &\leq&  2\int_{\mathrm{int}(K|\xi^\bot)}|(1-\frac{t}{2})(\lambda-y\cdot\nabla f(x))|^p\big((1-\frac{t}{2})\langle f\rangle(x)\big)^{1-p}\\ \label{70}
\nonumber   &+& |\frac{t}{2}(s-y\cdot\nabla g(x))|^p\big(\frac{t}{2}\langle g\rangle(x)\big)^{1-p}dx\\
\nonumber   &=&2\int_{\mathrm{int}(K|\xi^\bot)}(1-\frac{t}{2})|\lambda-y\cdot\nabla f(x)|^p\langle f\rangle^{1-p}(x)+ \frac{t}{2}|s-y\cdot\nabla g(x)|^p\langle g\rangle^{1-p}(x)dx\\
\nonumber   &=&(1-\frac{t}{2})h^p_{\tilde{\Pi}_p K}(y,\lambda)+\frac{t}{2}h^p_{\tilde{\Pi}_p K}(y,-s)\\
\nonumber   &=&1.
\end{eqnarray}
}
 Similarly, it also follows from formula (\ref{64}), the triangle inequality and Lemma \ref{60} that
 \begin{eqnarray*}
 \nonumber |(y,(\frac{t}{2}-1)s-\frac{t}{2}\lambda)|_{\tilde{\Pi}_p^\circ S_\xi^tK}^p &=& h^p_{\tilde{\Pi}_p S_\xi^tK}(y,(\frac{t}{2}-1)s-\frac{t}{2}\lambda)\\
 \nonumber &=&2\int_{\mathrm{int}(K|\xi^\bot)}|(1-\frac{t}{2})s+\frac{t}{2}\lambda+y\cdot\nabla \big((1-\frac{t}{2})f(x)+\frac{t}{2}g(x)\big)|^p\langle (1-\frac{t}{2})f+\frac{t}{2}g\rangle^{1-p}(x)dx\\
   &\leq&1.
\end{eqnarray*}
Thus, by the Lemma \ref{61}, one has
$$S_\xi^t{\tilde{\Pi}_p^\circ K}\subset \tilde{\Pi}_p^\circ S_\xi^t K.$$
\end{proof}

Now, consider sections of the $L^p$ polar projection body $\tilde{\Pi}_p^\circ S_\xi^t K$, denoted by
\begin{equation}\label{110}
  (\tilde{\Pi}_p^\circ S_\xi^t K)_z=\Big\{s\in \mathrm{span}\{\xi\}:(z,s)\in \tilde{\Pi}_p^\circ S_\xi^t K\Big\}.
\end{equation}
We have the following monotonicity.
\begin{corollary}\label{69}
Let $K$ be an orgin-symmetric convex body of $C^2_+$ and $\xi\in S^{n-1},p>1$. Given $z\in\xi^\bot$, the function $t\longmapsto |(\tilde{\Pi}_p^\circ S_\xi^t K)_z|,t\in[0,2]$, is non-decreasing on $[0,1]$ and non-increasing on $[1,2]$.
If $h(t-1)=|(\tilde{\Pi}_p^\circ S_\xi^t K)_z|$, then $h(s)$ is a even function on $[-1,1]$.
\end{corollary}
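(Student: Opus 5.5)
We split the proof into the symmetry statement and the monotonicity statement. Both rest on Lemma \ref{68}, Theorem \ref{65}, and the fact that the continuous Steiner symmetrizations compose affinely in the shift parameter.

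\emph{Evenness.} By Lemma \ref{68}, $S^{2-t}_\xi K=\sigma_{\xi^\bot}(S^t_\xi K)$. The map $\sigma_{\xi^\bot}$ is orthogonal, and the $L^p$ projection body and polarity commute with orthogonal maps (a change of variables in the definition of $S_{p,K}$ and of $h_{\tilde\Pi_p K}$ shows this). Hence
$$\tilde\Pi_p^\circ S^{2-t}_\xi K=\sigma_{\xi^\bot}(\tilde\Pi_p^\circ S^t_\xi K).$$
The reflection $\sigma_{\xi^\bot}$ fixes $z\in\xi^\bot$ and maps the fibre over $z$ to its mirror image, so the lengths agree: $|(\tilde\Pi_p^\circ S^{2-t}_\xi K)_z|=|(\tilde\Pi_p^\circ S^t_\xi K)_z|$. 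This says $h(1-t)=h(t-1)$, so $h$ is even on $[-1,1]$.

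\emph{Monotonicity on $[0,1]$.} Fix $0\le t_1<t_2\le 1$. We want to write $S^{t_2}_\xi K=S^{\tau}_\xi(S^{t_1}_\xi K)$ for some $\tau\in(0,1]$. By Definition \ref{67}, $S^t_\xi$ keeps every chord length $l_y$ and replaces the midpoint $c_y$ by $(1-t)c_y$. Applying $S^\tau_\xi$ to $S^{t_1}_\xi K$ therefore gives midpoints $(1-\tau)(1-t_1)c_y$. To match $(1-t_2)c_y$ we need
$$\tau=1-\frac{1-t_2}{1-t_1}\in(0,1].$$
(If $t_1=1$ the claim is trivial.)

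The body $K_1=S^{t_1}_\xi K$ is origin-symmetric. To apply Theorem \ref{65} to it, we also need it to be of class $C^2_+$. Its boundary graph functions $f_1,g_1$ are convex combinations of $f,g$, so they are $C^2$ with negative definite Hessians, exactly as in the proof of Lemma \ref{Steinerformula}. The formulas (\ref{64S}) and (\ref{66S}), which are all that the proof of Theorem \ref{65} actually used, hold for $K_1$.

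I expect this to be the main obstacle. Strictly speaking, $K_1$ may fail to be $C^2_+$ along the shadow boundary. The way around it is to rerun the proof of Theorem \ref{65} directly with $f_1,g_1$ in place of $f,g$: the only inputs there are the integral representations over $\mathrm{int}(K|\xi^\bot)$, the positivity of $\langle f_1\rangle,\langle g_1\rangle$, and Lemma \ref{60}. For $\tau=1$ one can alternatively appeal to Theorem \ref{59}.

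Granting this, Theorem \ref{65} gives $S^\tau_\xi(\tilde\Pi_p^\circ K_1)\subset\tilde\Pi_p^\circ S^{t_2}_\xi K$. Continuous Steiner symmetrization only translates each chord in direction $\xi$, so it preserves the length of every fibre over $z$. Therefore
$$|(\tilde\Pi_p^\circ S^{t_1}_\xi K)_z|=|(S^\tau_\xi\tilde\Pi_p^\circ K_1)_z|\le|(\tilde\Pi_p^\circ S^{t_2}_\xi K)_z|,$$
which is the claimed non-decrease on $[0,1]$.

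\emph{Non-increase on $[1,2]$.} This follows from the evenness already proved, since $t\mapsto 2-t$ maps $[1,2]$ onto $[0,1]$ and reverses order.
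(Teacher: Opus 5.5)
Your proposal is correct and takes essentially the same route as the paper. It uses the composition identity $S^\tau_\xi S^{t_1}_\xi K=S^{t_2}_\xi K$ with $\tau=1-\frac{1-t_2}{1-t_1}$, applies Theorem \ref{65} to $S^{t_1}_\xi K$ together with the invariance of fibre lengths under continuous Steiner symmetrization, and uses Lemma \ref{68} for evenness and the behaviour on $[1,2]$. Your two additions make explicit points the paper passes over: the orthogonal equivariance of $\tilde\Pi_p^\circ$, which the paper's appeal to Lemma \ref{68} uses tacitly, and the need to rerun Theorem \ref{65} for $S^{t_1}_\xi K$ via (\ref{64S})--(\ref{66S}) rather than quote it for a $C^2_+$ body.
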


\begin{proof}
Choosing any $0<t_1<t_2\leq1$, set $\lambda=1-\frac{1-t_2}{1-t_1}$, and one has $0<\lambda\leq1$. From Definition \ref{67}, then
\begin{eqnarray*}
  (S^\lambda_\xi S^{t_1}_\xi K )\cap L_\xi^y &=& y+((1-\lambda)(1-t_1)c_y+[-l_y,l_y])\xi  \\
   &=&  y+((1-t_2)c_y+[-l_y,l_y])\xi\\
   &=&  S^{t_2}_\xi K \cap L_\xi^y,
\end{eqnarray*}
for $y\in K|{\xi^\bot}$. Thus, this implies $$S^\lambda_\xi S^{t_1}_\xi K =S^{t_2}_\xi K.$$
Together with $\lambda\in(0,2)$ and Theorem \ref{65}, it follows that
$$S_\xi^{\lambda}{\tilde{\Pi}_p^\circ S^{t_1}_\xi K}\subset \tilde{\Pi}_p^\circ S_\xi^{\lambda} S_\xi^{t_1} K=\tilde{\Pi}_p^\circ S_\xi^{t_2} K.$$
Then, for $z\in \xi^\bot$
$$\big(S_\xi^{\lambda}{\tilde{\Pi}_p^\circ S^{t_1}_\xi K}\big)_z\subset \big(\tilde{\Pi}_p^\circ S_\xi^{t_2} K\big)_z.$$
So, this implies
$$\big|\big({\tilde{\Pi}_p^\circ S^{t_1}_\xi K}\big)_z\big|\leq \big|\big(\tilde{\Pi}_p^\circ S_\xi^{t_2} K\big)_z\big|.$$
Using Theorem \ref{65} again, one has
$$\big|\big({\tilde{\Pi}_p^\circ  K}\big)_z\big|\leq \big|\big(\tilde{\Pi}_p^\circ S_\xi^{t_2} K\big)_z\big|.$$
Thus,  $t\longmapsto |(\tilde{\Pi}_p^\circ S_\xi^t K)_z|$ is non-decreasing on $[0,1]$.

Now, if $2\geq t_2>t_1\geq1$, set $\lambda_1=2-t_1,\lambda_2=2-t_2$, so $0\leq\lambda_2<\lambda_1\leq1$. By Lemma \ref{68}, one has
$$|(\tilde{\Pi}^\circ_p S^{t_2}_\xi K)_z|=|(\tilde{\Pi}^\circ_p S^{\lambda_2}_\xi K)_z|\leq|(\tilde{\Pi}^\circ_p S^{\lambda_1}_\xi K)_z|=|(\tilde{\Pi}^\circ_p S^{t_1}_\xi K)_z|,$$
that is, $t\longmapsto |(\tilde{\Pi}_p^\circ S_\xi^t K)_z|$ is non-increasing on $[1,2]$.

Also by Lemma \ref{68}, $h(s)=|(\tilde{\Pi}_p^\circ S_\xi^{s+1} K)_z|$ is obviously a even function on $[-1,1]$.
\end{proof}

\subsection{The first order derivative of $|\tilde{\Pi}_p^\circ S_\xi^t K|$ with respect to $t$}

Now, we prove the key variation formula for $|\tilde{\Pi}_p^\circ S_\xi^t K|$ with respect to $t$ as follows:
\begin{proposition}\label{73}
Let $K$ be an origin-symmetric convex body of class $C^2_+$ denoted by (\ref{63}), $\xi\in S^{n-1}$, $p>1$. Then,
{\small
$$\frac{d|\tilde{\Pi}^\circ_pS^t_\xi K|}{dt}=\int_{\mathrm{int}(K|\xi^\bot)} \nabla'\Big(h_{\tilde{\Gamma}_p\tilde{\Pi}^\circ_pS^t_\xi K}\Big)(\theta_t)\cdot(\nabla g-\nabla f,0)h^{1-p}_{S^t_\xi K}(\theta_t)h^{p-1}_{\tilde{\Gamma}_p\tilde{\Pi}^\circ_pS^t_\xi K}(\theta_t)+
     \frac{p-1}{p}\langle g-f\rangle h^{-p}_{S^t_\xi K}(\theta_t)h^p_{\tilde{\Gamma}_p\tilde{\Pi}^\circ_pS^t_\xi K}(\theta_t) dx.$$
}
for $t\in[0,2]$ where $\theta_t=(-\big((1-\frac{t}{2})\nabla f+\frac{t}{2}\nabla g\big),1)$ (there is the one-sided derivative at $t=0,2$).
In particular, we have
{\small
$$\frac{d|\tilde{\Pi}^\circ_pS^t_\xi K|}{dt}\Big|_{t=0^+}=\int_{\mathrm{int}(K|\xi^\bot)} \nabla'\Big(h_{\tilde{\Gamma}_p\tilde{\Pi}^\circ_pK}\Big)(\theta_0)\cdot(\nabla g-\nabla f,0)h^{1-p}_{ K}(\theta_0)h^{p-1}_{\tilde{\Gamma}_p\tilde{\Pi}^\circ_p K}(\theta_0)+
     \frac{p-1}{p}\langle g-f\rangle h^{-p}_{ K}(\theta_0)h^p_{\tilde{\Gamma}_p\tilde{\Pi}^\circ_p K}(\theta_0) dx.$$
}

\end{proposition}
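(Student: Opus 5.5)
We compute the derivative of $|\tilde{\Pi}^\circ_pS^t_\xi K|$ through the polar volume formula (\ref{58}), differentiate under the integral, and then pass to a Fubini-type integral over $\mathrm{int}(K|\xi^\bot)$ using Lemma \ref{Steinerformula}. Write $K_t=S^t_\xi K$, and let $f_t=(1-\frac{t}{2})f+\frac{t}{2}g$, $g_t=(1-\frac t2)g+\frac t2 f$. Then $\partial_t f_t=\frac12(g-f)$ and, by linearity of $\langle\cdot\rangle$, $\partial_t\langle f_t\rangle=\frac12\langle g-f\rangle$.

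\textbf{Step 1 (polar volume).} By (\ref{58}),
$$|\tilde{\Pi}^\circ_pK_t|=\frac1n\int_{S^{n-1}}h_{\tilde{\Pi}_pK_t}^{-n}(u)\,du=\frac1n\int_{S^{n-1}}\big(h^p_{\tilde{\Pi}_pK_t}(u)\big)^{-n/p}du.$$
Lemma \ref{Steinerformula} gives $h^p_{\tilde{\Pi}_pK_t}(u)$ as an integral over $\mathrm{int}(K|\xi^\bot)$ with integrand depending smoothly on $t$. This holds because the Hessians of $f_t$ stay negative definite, so $\langle f_t\rangle=h_{K_t}(-\nabla f_t,1)$ is bounded below on compact subsets. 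Differentiating under the integral sign (justified by dominated convergence, uniformly in $u\in S^{n-1}$) yields
$$\frac{d}{dt}|\tilde{\Pi}^\circ_pK_t|=-\frac1p\int_{S^{n-1}}h^{-n-p}_{\tilde{\Pi}_pK_t}(u)\,\partial_t\big(h^p_{\tilde{\Pi}_pK_t}(u)\big)\,du.$$

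\textbf{Step 2 (differentiate the inner integrand and swap).} For $u=(y,\lambda)$, differentiating (\ref{64S}) gives
$$\partial_t h^p_{\tilde{\Pi}_pK_t}(u)=2\int\Big[-p\,|\lambda-y\cdot\nabla f_t|^{p-1}\mathrm{sgn}(\cdot)\,\tfrac12\, y\cdot(\nabla g-\nabla f)\,\langle f_t\rangle^{1-p}+(1-p)|\lambda-y\cdot\nabla f_t|^p\langle f_t\rangle^{-p}\,\tfrac12\langle g-f\rangle\Big]dx.$$
Note that $\lambda-y\cdot\nabla f_t=u\cdot\theta_t$. We then apply Fubini to exchange $du$ and $dx$. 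The $u$-integrals that appear are
$$\int_{S^{n-1}} h^{-n-p}_{\tilde\Pi_pK_t}(u)\,|u\cdot\theta|^p\,du \quad\text{and}\quad \int_{S^{n-1}} h^{-n-p}_{\tilde\Pi_pK_t}(u)\,|u\cdot\theta|^{p-1}\mathrm{sgn}(u\cdot\theta)\,u\,du.$$
Since $\rho_{\tilde\Pi^\circ_pK_t}=h^{-1}_{\tilde\Pi_pK_t}$, definition (\ref{72}) identifies the first as $h^p_{\tilde\Gamma_p\tilde\Pi^\circ_pK_t}(\theta)$. By (\ref{103}), the second is $\nabla'\big(\frac1p h^p_{\tilde\Gamma_p\tilde\Pi^\circ_pK_t}\big)(\theta)=h^{p-1}\nabla' h_{\tilde\Gamma_p\tilde\Pi^\circ_pK_t}(\theta)$, evaluated at $\theta=\theta_t$.

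\textbf{Step 3 (collect terms).} Using $\langle f_t\rangle=h_{K_t}(\theta_t)$ from (\ref{71}) and $y\cdot(\nabla g-\nabla f)=u\cdot(\nabla g-\nabla f,0)$, the two terms become
$$\nabla'h_{\tilde\Gamma_p\tilde\Pi^\circ_pK_t}(\theta_t)\cdot(\nabla g-\nabla f,0)\,h^{1-p}_{K_t}(\theta_t)\,h^{p-1}_{\tilde\Gamma_p\tilde\Pi^\circ_pK_t}(\theta_t)$$
and
$$\frac{p-1}{p}\langle g-f\rangle\, h^{-p}_{K_t}(\theta_t)\,h^p_{\tilde\Gamma_p\tilde\Pi^\circ_pK_t}(\theta_t).$$
The overall constants (the factor $2$, the $\frac12$ from $\partial_t f_t$, the $-\frac1p$, and the signs) have to be checked to combine into exactly the coefficients stated. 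Setting $t=0$ then gives the special case, with one-sided derivatives at the endpoints.

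\textbf{Main obstacle.} The main obstacle is justifying the differentiation under the integral sign and the Fubini exchange. Near $\partial(K|\xi^\bot)$ the gradients $\nabla f_t$ blow up, and $\langle f_t\rangle^{1-p}$ and $\langle f_t\rangle^{-p}$ must be controlled. My first attempt would be to bound everything by $h_{K_t}(\theta_t)\ge c|\theta_t|$, where $c>0$ because the origin is interior to $K_t$. Then $|u\cdot\theta_t|^p h_{K_t}^{1-p}(\theta_t)\lesssim|\theta_t|$, which should be integrable, since $\int|\theta_t|\,dx$ is comparable to the surface area of $K_t$. The same bound holds uniformly for $t$ in compact sets, which together with Lemma \ref{74} gives continuity of the derivative in $t$.
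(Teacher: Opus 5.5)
Your proposal is correct and follows essentially the same route as the paper: the polar volume formula combined with Lemma \ref{Steinerformula}, differentiation under the integral sign, a Fubini exchange, and identification of the two inner integrals via (\ref{72}) and (\ref{103}). Your dominating bound from $h_{K_t}(\theta_t)\ge c|\theta_t|$ is the paper's bound in another form (the paper normalizes $\theta_t$ and uses $\min h_{S^{t_0}_\xi K}$), and the constants you left unchecked do combine as stated: the factor $2$ cancels the $\frac12$ from $\partial_t f_t$ and $\partial_t\langle f_t\rangle$, and $-\frac1p$ turns $-p$ and $1-p$ into $1$ and $\frac{p-1}{p}$.
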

\begin{proof}
Let $(y,s)\in S^{n-1}\subset\xi^\bot\times \mathrm{span}\{\xi\}$. From formulae (\ref{58}) and (\ref{64S}), we have
\begin{eqnarray}
\nonumber  n|\tilde{\Pi}^\circ_pS^t_\xi K|&=&\int_{S^{n-1}}|(y,s)|^{-n}_{\tilde{\Pi}^\circ_pS^t_\xi K}d\mathcal{H}^{n-1} \\
\label{93}  &=& \int_{S^{n-1}}\big(h^p_{\tilde{\Pi}_pS^t_\xi K}(y,s)\big)^{-\frac{n}{p}}d\mathcal{H}^{n-1} \\
\nonumber   &=& \int_{S^{n-1}}\bigg(2\int_{\mathrm{int}(K|\xi^\bot)}|s-y\cdot\nabla \big((1-\frac{t}{2})f(x)+\frac{t}{2}g(x)\big)|^p\langle (1-\frac{t}{2})f+\frac{t}{2}g\rangle^{1-p}(x)dx\bigg)^{-\frac{n}{p}}d\mathcal{H}^{n-1}.
\end{eqnarray}

Set $\theta_t=(-\big((1-\frac{t}{2})\nabla f+\frac{t}{2}\nabla g\big),1)$, $\tilde{\theta}_t=\theta_t/|\theta_t|$ and $F_t=2(\theta_t\cdot(y,s))^ph^{1-p}_{S^t_\xi}(\theta_t)$.
Since $K$ is of $C^2_+$, then $\tilde{\theta}_t:\mathrm{int}(K|{\xi^\bot})\rightarrow S^{n-1}_+$ is s a diffeomorphism of class $C^1$, where $ S^{n-1}_+\subset\xi^\bot\times \mathrm{span}\{\xi\}$ denotes the upper open hemisphere. Fixed $(y,s)\in S^{n-1}$ and $t_0\in[0,2]$, $\{u\in S^{n-1}_+: u\cdot (y,s)=0\}$ is a $(n-2)$-dimensional submanifold, so
$\{x\in \mathrm{int}(K|{\xi^\bot}): \tilde{\theta}_{t_0}(x)\cdot (y,s)=0\}$ is a $(n-2)$-dimensional submanifold. This shows that
$$0=\mathcal{H}^{n-1}(\{x\in \mathrm{int}(K|{\xi^\bot}): \tilde{\theta}_{t_0}(x)\cdot (y,s)=0\})=\mathcal{H}^{n-1}(\{x\in \mathrm{int}(K|{\xi^\bot}): {\theta}_{t_0}(x)\cdot (y,s)=0\}),$$
that is,
\begin{equation}\label{116}
  \big|\{x\in \mathrm{int}(K|{\xi^\bot}): {\theta}_{t_0}(x)\cdot (y,s)=0\}\big|=0.
\end{equation}
Next, fix $x\in \mathrm{int}(K|{\xi^\bot})$.
From Lemma \ref{74} and Lemma \ref{75}, one has $h^\alpha_{S^t_\xi K}\rightarrow h^\alpha_{S^{t_0}_\xi K}$ and $h^\alpha_{\tilde{\Pi}_pS^t_\xi K}\rightarrow h^\alpha_{\tilde{\Pi}_p S^{t_0}_\xi K}$ uniformly as $t\rightarrow t_0$ on $S^{n-1}$ for $\alpha\in\mathbb{R}\backslash\{0\}$, so there is a enough small constant $\delta>0$
 such that for $t\in(t_0-\delta,t_0+\delta)$ (or $t\in[0,\delta)$, $t\in(2-\delta,2]$)

\begin{equation}\label{104}
  h^{1-p}_{S^t_\xi K}\leq h^{1-p}_{S^{t_0}_\xi K}+1,\ \ h^{-p}_{S^t_\xi K}\leq h^{-p}_{S^{t_0}_\xi K}+1\ \ \mathrm{and}\ \ h^{-(n+p)}_{\tilde{\Pi}_pS^t_\xi K}\leq h^{-(n+p)}_{\tilde{\Pi}_pS^{t_0}_\xi K}+1\ \ \mathrm{on}\ S^{n-1},
\end{equation}
and the sign functions $\mathrm{sgn}(\theta_t\cdot (y,s))=\mathrm{sgn}(\theta_{t_0}\cdot(y,s))$  when $\theta_{t_0}\cdot(y,s)\ne 0$.
Thus, together with (\ref{116}), we have the function $t\longmapsto \mathrm{sgn}^p(\theta_{t}\cdot(y,s))F_t$ is differentiable at $t=t_0$ for a.e. $x\in \mathrm{int}(K|{\xi^\bot})$.
And, by the mean value theorem, for each fixed $r\in(t_0-\delta,t_0+\delta)$ there is $\eta\in(t_0,r)$ (or $\eta\in(r,t_0)$) such that
$$F_r-F_{t_0}=\frac{\partial F_t}{\partial t}\Big|_{t=\eta}(r-t_0),$$
then implying
{\small
\begin{equation}\label{105}
 |\mathrm{sgn}^p(\theta_{r}\cdot(y,s))F_r-\mathrm{sgn}^p(\theta_{t_0}\cdot(y,s))F_{t_0}|=
|\mathrm{sgn}^p(\theta_{t_0}\cdot(y,s))F_r-\mathrm{sgn}^p(\theta_{t_0}\cdot(y,s))F_{t_0}|=
|\frac{\partial F_t}{\partial t}\Big|_{t=\eta}(r-t_0)|,
\end{equation}
}
for a.e. $x\in \mathrm{int}(K|{\xi^\bot})$. Now, by (\ref{71}) and (\ref{104}), for each $t\in(t_0-\delta,t_0+\delta)$ (or $t\in(0,\delta)$, $t\in(2-\delta,2)$) we compute that
\begin{eqnarray*}
  |\frac{\partial F_t}{\partial t}| &=&   |p(\nabla f-\nabla g)\cdot y (\theta_t\cdot(y,s))^{p-1}h^{1-p}_{S^t_\xi}(\theta_t)+(1-p)(\langle g-f\rangle)
  (\theta_t\cdot(y,s))^{p}h^{-p}_{S^t_\xi}(\theta_t) | \\
  &=& |p(\nabla f-\nabla g)\cdot y \big((\theta_t/|\theta_t|)\cdot(y,s)\big)^{p-1}h^{1-p}_{S^t_\xi}(\theta_t/|\theta_t|)+(1-p)(\langle g-f\rangle)
  \big((\theta_t/|\theta_t|)\cdot(y,s)\big)^{p}h^{-p}_{S^t_\xi}(\theta_t/|\theta_t|) |\\
  &\leq& p|(\nabla f-\nabla g)\cdot y| \big(h^{1-p}_{S^{t_0}_\xi}(\tilde{\theta}_{t})+1\big)+(p-1)|\langle g-f\rangle|\big(h^{-p}_{S^{t_0}_\xi}(\tilde{\theta}_{t})+1\big) \\
  &\leq& p|(\nabla f-\nabla g)| \big((\min_{u\in S^{n-1}}h_{S^{t_0}_\xi}(u))^{1-p}+1\big)+(p-1)|\langle g-f\rangle|\big((\min_{u\in S^{n-1}}h_{S^{t_0}_\xi}(u))^{-p}+1\big)\\
  &\leq& c_2 (|\nabla f|+|\nabla g|)+c_3(\langle g\rangle+\langle f\rangle),
\end{eqnarray*}
 where $c_2,c_3$ are constants only depending on $t_0,p,n$. It is easy to see that $|\nabla f|,|\nabla g|,\langle g\rangle,\langle f\rangle\in L^1({\mathrm{int}(K|\xi^\bot)})$.
Together with the dominated convergence theorem  and (\ref{105}), then
we have the derivative
$$\frac{\partial h^p_{\tilde{\Pi}_pS^t_\xi K}(y,s)}{\partial t}\Big|_{t=t_0}$$
exists for $t_0\in[0,2]$ (there is the one-sided derivative at $t_0=0,2$).
And for $t\in (t_0,r)$ (or $t\in(r,t_0)$), there is $\eta_t\in (t,r)$ such that
\begin{eqnarray}
\nonumber  \bigg|\frac{\partial h^p_{\tilde{\Pi}_pS^t_\xi K}(y,s)}{\partial t}\bigg| &=& \Big|\lim_{r\rightarrow t^-} \int_{\mathrm{int}(K|\xi^\bot)} \frac{\mathrm{sgn}^p(\theta_{r}\cdot(y,s))F_r-\mathrm{sgn}^p(\theta_{t}\cdot(y,s))F_{t}}{r-t}dx\Big|\\
\nonumber   &\leq&  \int_{\mathrm{int}(K|\xi^\bot)} \lim_{r\rightarrow t^-}\big|\frac{\partial F_r}{\partial r}\Big|_{r=\eta_t}\big|dx \\
\label{115}   &\leq&\int_{\mathrm{int}(K|\xi^\bot)} c_2 (|\nabla f|+|\nabla g|)+c_3(\langle g\rangle+\langle f\rangle)dx\\
\nonumber   &=& c_4.
\end{eqnarray}
Thus, using the mean value theorem, (\ref{104}) and (\ref{115}) (or replace (\ref{115}) by Lemma \ref{113}), there is $\eta'\in (t_0,r)$ (or $\eta'\in(r,t_0)$) such that
\begin{eqnarray*}
  |\big(h^p_{\tilde{\Pi}_pS^r_\xi K}(y,s)\big)^{-\frac{n}{p}}-\big(h^p_{\tilde{\Pi}_pS^{t_0}_\xi K}(y,s)\big)^{-\frac{n}{p}}| &=& \Big|\frac{\partial\big(h^p_{\tilde{\Pi}_pS^t_\xi K}(y,s)\big)^{-\frac{n}{p}}}{\partial t}\Big|_{t=\eta'}\Big||r-t_0| \\
   &=&  \frac{n}{p}h^{-(n+p)}_{\tilde{\Pi}_pS^{\eta'}_\xi K}(y,s) \Big|\frac{\partial h^p_{\tilde{\Pi}_pS^t_\xi K}(y,s)}{\partial t}\Big|_{t=\eta'}\Big||r-t_0|\\
   &\leq&\frac{nc_4}{p}\big(h^{-(n+p)}_{\tilde{\Pi}_pS^{t_0}_\xi K}(y,s)+1\big) |r-t_0| \\
   &\leq&\frac{nc_4}{p}\big((\min_{u\in S^{n-1}}h_{\tilde{\Pi}_pS^{t_0}_\xi K}(u))^{-(n+p)}+1\big) |r-t_0|\\
   &\leq& c_5|r-t_0|,
\end{eqnarray*}
for $t_0\in[0,2]$ where $c_5$ is a constant only depending on $t_0,p,n$. Therefore, by (\ref{93}), the dominated convergence theorem and Fubini's theorem, we compute that
{\small
\begin{eqnarray}
\nonumber &\ & n\frac{d|\tilde{\Pi}^\circ_pS^t_\xi K|}{dt} = -\frac{n}{p}\int_{S^{n-1}}\big(h^p_{\tilde{\Pi}_pS^t_\xi K}(y,s)\big)^{-\frac{n}{p}-1}\frac{\partial h^p_{\tilde{\Pi}_pS^t_\xi K}(y,s)}{\partial t}d\mathcal{H}^{n-1} \\
\nonumber   &=& -2\frac{n}{p}\int_{S^{n-1}}\rho_{\tilde{\Pi}^\circ_pS^t_\xi K}^{n+p}(y,s)\int_{\mathrm{int}(K|\xi^\bot)}\frac{\partial\big(|s-y\cdot\nabla \big((1-\frac{t}{2})f(x)+\frac{t}{2}g(x)\big)|^p\langle (1-\frac{t}{2})f+\frac{t}{2}g\rangle^{1-p}(x)\big)}{\partial t}dxd\mathcal{H}^{n-1} \\
\nonumber   &=& -2\frac{n}{p}\int_{S^{n-1}}\rho_{\tilde{\Pi}^\circ_pS^t_\xi K}^{n+p}(y,s)\bigg(\int_{\mathrm{int}(K|\xi^\bot)}\frac{p}{2}|s-(1-\frac{t}{2})y\cdot \nabla f(x)-\frac{t}{2}y\cdot\nabla g(x)|^{p-1}\mathrm{sgn}\big(s-(1-\frac{t}{2})y\cdot \nabla f(x) \\
\nonumber   &-&\frac{t}{2}y\cdot\nabla g(x)\big)\big((1-\frac{t}{2})\langle f\rangle(x)+\frac{t}{2}\langle g\rangle(x)\big)^{1-p}(y\cdot\nabla f(x)-y\cdot\nabla g(x))dx+ \int_{\mathrm{int}(K|\xi^\bot)}\frac{1-p}{2}|s-(1-\frac{t}{2})y\cdot \nabla f(x)\\
\nonumber   &-&\frac{t}{2}y\cdot\nabla g(x)|^{p}\big((1-\frac{t}{2})\langle f\rangle(x)+\frac{t}{2}\langle g\rangle(x)\big)^{-p}(\langle g\rangle(x)-\langle f\rangle(x)) dx\bigg)d\mathcal{H}^{n-1} \\
\nonumber   &=& -2\frac{n}{p}\int_{\mathrm{int}(K|\xi^\bot)}\bigg(\frac{p}{2}\big((1-\frac{t}{2})\langle f\rangle(x)+\frac{t}{2}\langle g\rangle(x)\big)^{1-p}(y\cdot\nabla f(x)-y\cdot\nabla g(x))\int_{S^{n-1}}\rho_{\tilde{\Pi}^\circ_pS^t_\xi K}^{n+p}(y,s)|s-(1-\frac{t}{2})y\cdot \nabla f(x)\\
\nonumber   &-&\frac{t}{2}y\cdot\nabla g(x)|^{p-1}\mathrm{sgn}\big(s-(1-\frac{t}{2})y\cdot \nabla f(x) -\frac{t}{2}y\cdot\nabla g(x)\big)d\mathcal{H}^{n-1}(y,s)+ \frac{1-p}{2}\big((1-\frac{t}{2})\langle f\rangle(x)\\
\nonumber   &+&\frac{t}{2}\langle g\rangle(x)\big)^{-p}(\langle g\rangle(x)-\langle f\rangle(x))\int_{S^{n-1}}\rho_{\tilde{\Pi}^\circ_pS^t_\xi K}^{n+p}(y,s)|s-(1-\frac{t}{2})y\cdot \nabla f(x)-\frac{t}{2}y\cdot\nabla g(x)|^{p} d\mathcal{H}^{n-1}(y,s)\bigg)dx.
\end{eqnarray}
}

Thus, together with (\ref{71}), (\ref{72}) and (\ref{103}), we compute
{\footnotesize
\begin{eqnarray*}
\frac{d|\tilde{\Pi}^\circ_pS^t_\xi K|}{dt} &=& -\int_{\mathrm{int}(K|\xi^\bot)} (\nabla f-\nabla g,0)\cdot\bigg(\int_{S^{n-1}}\rho_{\tilde{\Pi}^\circ_pS^t_\xi K}^{n+p}(y,s)|(y,s)\cdot \theta_t|^{p-1}\mathrm{sgn}\big((y,s)\cdot \theta_t\big)(y,s)d\mathcal{H}^{n-1}\bigg)\big(h_{S^t_\xi K}(\theta_t)\big)^{1-p}dx \\
&-&\frac{(1-p)}{p}\int_{\mathrm{int}(K|\xi^\bot)} \big(h_{S^t_\xi K}(\theta_t)\big)^{-p}(\langle g\rangle(x)-\langle f\rangle(x))\bigg(\int_{S^{n-1}}\rho_{\tilde{\Pi}^\circ_pS^t_\xi K}^{n+p}(y,s)|(y,s)\cdot \theta_t|^{p} d\mathcal{H}^{n-1}\bigg)dx \\
&=& \int_{\mathrm{int}(K|\xi^\bot)} (\nabla g-\nabla f,0)\cdot\nabla'\Big(\frac{1}{p}h^p_{\tilde{\Gamma}_p\tilde{\Pi}^\circ_pS^t_\xi K}\Big)(\theta_t)\big(h_{S^t_\xi K}(\theta_t)\big)^{1-p}dx +\frac{p-1}{p}\int_{\mathrm{int}(K|\xi^\bot)} \big(h_{S^t_\xi K}(\theta_t)\big)^{-p}(\langle g\rangle(x)
     -\langle f\rangle(x))h^p_{\tilde{\Gamma}_p\tilde{\Pi}^\circ_pS^t_\xi K}(\theta_t) dx \\
     &=&\int_{\mathrm{int}(K|\xi^\bot)} (\nabla g-\nabla f,0)\cdot\nabla'\Big(h_{\tilde{\Gamma}_p\tilde{\Pi}^\circ_pS^t_\xi K}\Big)(\theta_t)\big(h_{S^t_\xi K}(\theta_t)\big)^{1-p}h^{p-1}_{\tilde{\Gamma}_p\tilde{\Pi}^\circ_pS^t_\xi K}(\theta_t)dx+
     \frac{p-1}{p}\int_{\mathrm{int}(K|\xi^\bot)} \langle g-f\rangle h^{-p}_{S^t_\xi K}(\theta_t)h^p_{\tilde{\Gamma}_p\tilde{\Pi}^\circ_pS^t_\xi K}(\theta_t) dx.
\end{eqnarray*}
}
that is,
{\small
$$\frac{d|\tilde{\Pi}^\circ_pS^t_\xi K|}{dt}=\int_{\mathrm{int}(K|\xi^\bot)} \nabla'\Big(h_{\tilde{\Gamma}_p\tilde{\Pi}^\circ_pS^t_\xi K}\Big)(\theta_t)\cdot(\nabla g-\nabla f,0)h_{S^t_\xi K}^{1-p}(\theta_t)h^{p-1}_{\tilde{\Gamma}_p\tilde{\Pi}^\circ_pS^t_\xi K}(\theta_t)+
     \frac{p-1}{p}\langle g-f\rangle h^{-p}_{S^t_\xi K}(\theta_t)h^p_{\tilde{\Gamma}_p\tilde{\Pi}^\circ_pS^t_\xi K}(\theta_t) dx$$
     }
     for $t\in[0,2]$ (there is the one-sided derivative at $t=0,2$).
\end{proof}

Next, we prove the differentiability of the length of the section $(\tilde{\Pi}^\circ_pS^t_\xi K)_y$ for each $y\in \xi^\bot$ and $t\in [0,2]$ (see (\ref{110})).
\begin{lemma}\label{113}
Let $K$ be an  orgin-symmetric convex body of class $C^2_+$ and $\xi\in S^{n-1},p>1$. Then, the function $\rho_{\tilde{\Pi}^\circ_pS^t_\xi K}(v)$
is continuously differentiable for each $(t,v)\in[0,2]\times \rn$ (there is the one-sided continuity at $t=0,2$). Thus, there is an open neighborhood $V(0)\in\xi^\bot$ of $0\in\xi^\bot$ such that
$$|(\tilde{\Pi}^\circ_pS^t_\xi K)_y|$$
is continuously differentiable for each $(t,y)\in[0,2]\times V(0)$ (there is the one-sided derivative at $t=0,2$).
Particularly, when $y=0$ we have
$$\frac{\partial |(\tilde{\Pi}^\circ_pS^t_\xi K)_0|}{\partial t}=2\frac{\partial\rho_{\tilde{\Pi}^\circ_pS^t_\xi K}(0,1)}{\partial t}.$$
\end{lemma}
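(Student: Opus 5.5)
We view $\rho_{\tilde{\Pi}^\circ_pS^t_\xi K}(v)=h_{\tilde{\Pi}_pS^t_\xi K}(v)^{-1}=\big(h^p_{\tilde{\Pi}_pS^t_\xi K}(v)\big)^{-1/p}$ for $v\neq0$, and write $H(t,v)=h^p_{\tilde{\Pi}_pS^t_\xi K}(v)$. By Lemma \ref{Steinerformula},
$$H(t,(y,s))=2\int_{\mathrm{int}(K|\xi^\bot)}\big|s-y\cdot\nabla f_t(x)\big|^p\langle f_t\rangle^{1-p}(x)\,dx,\qquad f_t=(1-\tfrac t2)f+\tfrac t2 g .$$
It therefore suffices to show that $H$ is $C^1$ in $(t,v)$ on $[0,2]\times(\rn\setminus\{0\})$ and strictly positive there. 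Positivity follows from $\tilde{\Pi}_pS^t_\xi K$ being a convex body containing the origin in its interior, uniformly in $t$ by Lemmas \ref{74} and \ref{75}. For the $C^1$ property, observe that the integrand $\phi(t,v,x)=|\theta_t(x)\cdot v|^p\,h_{S^t_\xi K}(\theta_t(x))^{1-p}$ is $C^1$ in $(t,v)$ for each fixed $x$, because $p>1$ makes $u\mapsto|u|^p$ a $C^1$ function. Its partial derivatives are
$$\partial_v\phi=p|\theta_t\cdot v|^{p-1}\mathrm{sgn}(\theta_t\cdot v)\,\theta_t\,h^{1-p}$$
and the $\partial_t$ expression already computed in the proof of Proposition \ref{73}.

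The main step is to dominate these derivatives locally uniformly in $(t,v)$ by an $L^1(\mathrm{int}(K|\xi^\bot))$ function. Two features make this work. Since $h_{S^t_\xi K}(\theta_t)=\langle f_t\rangle$ is $1$-homogeneous in $\theta_t$, we can normalize by $|\theta_t|$ exactly as in Proposition \ref{73}. In addition, $\min_{S^{n-1}}h_{S^t_\xi K}$ is bounded below uniformly for $t\in[0,2]$, by compactness and Lemma \ref{74}. With these, the bound $c(|\nabla f|+|\nabla g|+\langle f\rangle+\langle g\rangle)$ from Proposition \ref{73} now holds uniformly for $(t,v)$ in compact sets, and the analogous bound holds for $\partial_v\phi$. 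Differentiating under the integral sign then gives the partial derivatives of $H$, and dominated convergence gives their continuity, since the integrands are continuous in $(t,v)$ for a.e. $x$. The a.e. claim uses (\ref{116}): the set where $\theta_t\cdot v=0$ is null, and although the sign function jumps there, the factor $|\theta_t\cdot v|^{p-1}$ vanishes on that set, so $\partial_v\phi$ is in fact continuous everywhere. I expect this uniform domination, especially near $t=0,2$ and with one-sided derivatives, to be the main technical obstacle.

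For the sections, $K$ origin-symmetric implies $S^t_\xi K$ is origin-symmetric, so $\tilde{\Pi}^\circ_pS^t_\xi K$ is also origin-symmetric and contains $0$ in its interior. Hence for $y$ in a small neighborhood $V(0)$ of $0$ in $\xi^\bot$, the line $y+\mathrm{span}\{\xi\}$ meets the interior, and the section is an interval $[-b(t,y),a(t,y)]$ with $a,b>0$. The endpoint $a$ is characterized by $H(t,(y,a))=1$. Since $H$ is $C^1$ and, for $y$ near $0$, $\partial_sH(t,(y,a))>0$ (the level set $H=1$ is the boundary of a strictly convex $C^1$ body and at $y=0$ its normal at $(0,a)$ has positive $\xi$-component), the implicit function theorem gives $a\in C^1$. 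The same argument gives $b\in C^1$, after shrinking $V(0)$ uniformly in $t\in[0,2]$ by compactness. Then $|(\tilde{\Pi}^\circ_pS^t_\xi K)_y|=a+b$ is $C^1$.

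Finally, at $y=0$ we have $a(t,0)=\rho_{\tilde{\Pi}^\circ_pS^t_\xi K}(0,1)$ and $b(t,0)=\rho_{\tilde{\Pi}^\circ_pS^t_\xi K}(0,-1)=\rho_{\tilde{\Pi}^\circ_pS^t_\xi K}(0,1)$ by origin symmetry. Differentiating $a(t,0)+b(t,0)$ in $t$ yields the stated formula.
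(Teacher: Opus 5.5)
Your proposal is correct and follows the paper's route. Both arguments get joint $C^1$ regularity of $h^p_{\tilde{\Pi}_pS^t_\xi K}(v)$ by differentiating the formula of Lemma \ref{Steinerformula} under the integral, dominating via the $|\theta_t|$-normalization and a uniform lower bound on $h_{S^t_\xi K}$. Both then pass to $\rho=(h^p)^{-1/p}$ for $v\neq 0$, apply the implicit function theorem to the chord endpoints, and use origin symmetry at $y=0$.

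The only difference is the section step. You solve the scalar equation $H(t,(y,s))=1$ for the endpoint height, which is nondegenerate since $\partial_sH>0$. This also follows directly from the Euler identity $\nabla_vH\cdot v=p$ on the level set together with the chord meeting the interior, so no continuity argument from $y=0$ is needed. The paper instead parametrizes the endpoint radially and solves $\rho_{\tilde{\Pi}^\circ_pS^t_\xi K}(z,\sqrt{1-|z|^2})\,z=y$ by an $(n-1)$-dimensional implicit function theorem at $z=0$, where the $z$-Jacobian is $\rho_{\tilde{\Pi}^\circ_pS^t_\xi K}(0,1)I_{n-1}$.
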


\begin{proof}
Set $\theta_t=(-\big((1-\frac{t}{2})\nabla f+\frac{t}{2}\nabla g\big),1)$ and $\tilde{\theta}_t=\theta_t/|\theta_t|$.
From (\ref{64S})  and letting $(t,v)\rightarrow (t_0,v_0)\in[0,2]\times \rn$, we have
\begin{eqnarray*}
  \frac{\partial h^p_{\tilde{\Pi}_pS^t_\xi K}(v)}{\partial t} &=& \int_{\mathrm{int}(K|\xi^\bot)} p(\nabla f-\nabla g,0)\cdot v|v\cdot \theta_t|^{p-1}\mathrm{sgn}\big(v\cdot \theta_t\big)h_{S^t_\xi K}^{1-p}(\theta_t) +{(p-1)} h_{S^t_\xi K}^{-p}(\theta_t)(\langle f-g\rangle)|v\cdot \theta_t|^{p} dx \\
  &=&\int_{\mathrm{int}(K|\xi^\bot)} p(\nabla f-\nabla g,0)\cdot v|v\cdot \tilde{\theta}_t|^{p-1}\mathrm{sgn}\big(v\cdot \tilde{\theta}_t\big)h_{S^t_\xi K}^{1-p}(\tilde{\theta}_t) +{(p-1)} h_{S^t_\xi K}^{-p}(\theta_t)(\langle f-g\rangle)|v\cdot \tilde{\theta}_t|^{p} dx
\end{eqnarray*}
(there is the one-sided derivative at $t=0,2$), and then
\begin{eqnarray*}
   &\ & |p(\nabla f-\nabla g,0)\cdot v|v\cdot \tilde{\theta}_t|^{p-1}\mathrm{sgn}\big(v\cdot \tilde{\theta}_t\big)h_{S^t_\xi K}^{1-p}(\tilde{\theta}_t) +{(p-1)} h_{S^t_\xi K}^{-p}(\theta_t)(\langle f-g\rangle)|v\cdot \tilde{\theta}_t|^{p}| \\
   &\leq&  p(|v_0|+1)^p(|\nabla f|+|\nabla g|) \big((\min_{u\in S^{n-1}}h_{S^{t_0}_\xi}(u))^{1-p}+1\big)+(p-1)(|v_0|+1)^p\langle g+f\rangle\big((\min_{u\in S^{n-1}}h_{S^{t_0}_\xi}(u))^{-p}+1\big).
\end{eqnarray*}
Together with the dominated convergence theorem, thus $\frac{\partial h^p_{\tilde{\Pi}_pS^t_\xi K}(v)}{\partial t}$ is continuous for $(t,v)\in[0,2]\times \rn$.
Similarly, letting $(t,v)\rightarrow (t_0,v_0)$, one has
\begin{eqnarray*}
  \nabla'( h^p_{\tilde{\Pi}_pS^t_\xi K})(v) &=& 2p\int_{\mathrm{int}(K|\xi^\bot)} \theta_t|v\cdot \theta_t|^{p-1}\mathrm{sgn}\big(v\cdot \theta_t\big)h_{S^t_\xi K}^{1-p}(\theta_t)dx\\
  &=&2p\int_{\mathrm{int}(K|\xi^\bot)} {\theta}_t|v\cdot \tilde{\theta}_t|^{p-1}\mathrm{sgn}\big(v\cdot \theta_t\big)h_{S^t_\xi K}^{1-p}(\tilde{\theta}_t)dx
\end{eqnarray*}
and then
\begin{eqnarray*}
   &\ & \big|{\theta}_t|v\cdot \tilde{\theta}_t|^{p-1}\mathrm{sgn}\big(v\cdot \theta_t\big)h_{S^t_\xi K}^{1-p}(\tilde{\theta}_t)\big| \\
   &\leq&  (|v_0|+1)^p \big((\min_{u\in S^{n-1}}h_{S^{t_0}_\xi}(u))^{1-p}+1\big)\sqrt{1+|\nabla \big((1-\frac{t}{2})f+\frac{t}{2}g\big)|^2}\\
   &\leq&  (|v_0|+1)^p \big((\min_{u\in S^{n-1}}h_{S^{t_0}_\xi}(u))^{1-p}+1\big)\Big(\sqrt{1+|\nabla \big((1-\frac{t_0}{2})f+\frac{t_0}{2}g\big)|^2}+1\Big)
\end{eqnarray*}
by $\tilde{\theta}_t=\nu_{S^t_\xi K}(u)\rightarrow \nu_{S^{t_0}_\xi K}(u)=\tilde{\theta}_{t_0}$ uniformly $u\in S^{n-1}_+$ the upper open hemisphere.
Since
$$\int_{\mathrm{int}(K|\xi^\bot)} \sqrt{1+|\nabla \big((1-\frac{t_0}{2})f+\frac{t_0}{2}g\big)|^2} dx=\frac{1}{2}P(S^{t_0}_\xi K)$$
where $P(S^{t_0}_\xi K)$ is the surface area of $S^{t_0}_\xi K$, then from the dominated convergence theorem, $\nabla'( h^p_{\tilde{\Pi}_pS^t_\xi K})(v)$ is continuous for $(t,v)\in[0,2]\times \rn$.
Combined with  $\rho_{\tilde{\Pi}^\circ_pS^t_\xi K}= \big(h^p_{\tilde{\Pi}_pS^t_\xi K}\big)^{-1/p}$, this implies that $\rho_{\tilde{\Pi}^\circ_pS^t_\xi K}$ is continuously differentiable for each $(t,v)\in[0,2]\times \rn$.

Now, consider the section $(\tilde{\Pi}^\circ_pS^t_\xi K)_y$ for each $y\in \xi^\bot$.
According to
$\tilde{\Pi}^\circ_pS^t_\xi K\rightarrow \tilde{\Pi}^\circ_p S^{t_0}_\xi K$ as $t\rightarrow t_0$, one has
$ \partial(\tilde{\Pi}^\circ_pS^t_\xi K)\rightarrow \partial(\tilde{\Pi}^\circ_p S^{t_0}_\xi K)$ as $t\rightarrow t_0$.
And, notice that $\tilde{\Pi}^\circ_pS^t_\xi K$ is origin-symmetric, so one of two points of 0-section $\big(\partial(\tilde{\Pi}^\circ_pS^t_\xi K)\big)_0$ is positive and the other is negative.
Combined with   the finite covering property of $[0,2]$,
then there is  an open neighborhood $U(0)\subset\xi^\bot$ of $0\in\xi^\bot$
such that for each $t\in [0,2]$ and each $y\in U(0)$, there are unique two points $z,w\in\mathrm{int}(B^n|\xi^\bot)$ such that
\begin{equation}\label{109}
  y= \rho_{\tilde{\Pi}^\circ_pS^t_\xi K}{(z,\sqrt{1-|z|^2})}z=\rho_{\tilde{\Pi}^\circ_pS^t_\xi K}{(w,-\sqrt{1-|w|^2})}w
\end{equation}
and
\begin{equation}\label{111}
   |(\tilde{\Pi}^\circ_pS^t_\xi K)_y|=\rho_{\tilde{\Pi}^\circ_pS^t_\xi K}(z,\sqrt{1-|z|^2})\sqrt{1-|z|^2}+\rho_{\tilde{\Pi}^\circ_pS^t_\xi K}(w,-\sqrt{1-|w|^2})\sqrt{1-|w|^2},
\end{equation}
and by (\ref{109}), there exists the functions
$$z=h(t,y)\ \ \ \mathrm{and}\ \ \ w=\tilde{h}(t,y).$$
Fixed $t$ in (\ref{109}), one has
$$|z|\leq|y|(\min_{u\in S^{n-1}}\rho_{\tilde{\Pi}^\circ_p S^t_\xi K}(u))^{-1}\ \ \ \mathrm{and} \ \ \ |w|\leq|y|(\min_{u\in S^{n-1}}\rho_{\tilde{\Pi}^\circ_p S^t_\xi K}(u))^{-1}.$$
Then this show that
\begin{equation}\label{112}
  h(t,0)=\tilde{h}(t,0)=0
\end{equation}
for each $t\in[0,2]$.
By $ \partial(\tilde{\Pi}^\circ_pS^t_\xi K)\rightarrow \partial(\tilde{\Pi}^\circ_p  K)$ as $t\rightarrow 0^+$, this implies that
\begin{equation}\label{114}
  \lim_{t\rightarrow 0^+}h(t,y)=h(0,y)\ \ \ \mathrm{and}\ \ \ \lim_{t\rightarrow 0^+}\tilde{h}(t,y)=\tilde{h}(0,y)
\end{equation}
 for $y\in U(0)$. And, it is easy to see that
 \begin{equation}\label{117}
  \lim_{y\rightarrow y_0}h(0,y)=h(0,y_0)\ \ \ \mathrm{and}\ \ \ \lim_{y\rightarrow y_0}\tilde{h}(0,y)=\tilde{h}(0,y_0)
\end{equation}
for $y_0\in U(0)$.
 
 Next, set
$$H(t,z,y)=\rho_{\tilde{\Pi}^\circ_pS^t_\xi K}{(z,\sqrt{1-|z|^2})}z-y$$
for each $(t,z,y)\in [0,2]\times \mathrm{int}(B^n|\xi^\bot)\times \xi^\bot$.
So it is easy to see that
$$\nabla_yH(t,z,y)=-I_{n-1},\ \ \ \frac{\partial H}{\partial t}(t,z,y)=\frac{\partial \rho_{\tilde{\Pi}^\circ_pS^t_\xi K}(z,\sqrt{1-|z|^2})}{\partial t}z$$
and
$$\nabla_zH(t,z,y)=\Big(\nabla'(\rho_{\tilde{\Pi}^\circ_pS^t_\xi K})(z,\sqrt{1-|z|^2})\Big)\left(
                                                                                   \begin{array}{c}
                                                                                     I_{n-1} \\
                                                                                     -z(1-|z|^2)^{-\frac{1}{2}} \\
                                                                                   \end{array}
                                                                                 \right) \otimes z+\rho_{\tilde{\Pi}^\circ_pS^t_\xi K}(z,\sqrt{1-|z|^2})I_{n-1}
$$
for each $(t,z,y)\in [0,2]\times \mathrm{int}(B^n|\xi^\bot)\times \xi^\bot$ where $I_{n-1}$ is the identity transformation on $\xi^\bot$ (there is the one-sided derivative at $t=0,2$). By the continuous differentiability of $\rho_{\tilde{\Pi}^\circ_pS^t_\xi K}$, then $H(t,z,y)$ is continuously differentiable. Meanwhile, $H(t,0,0)=0$ and $\nabla_zH(t,0,0)=\rho_{\tilde{\Pi}^\circ_pS^t_\xi K}(0,1)I_{n-1}$ is nonsingular for each $t\in[0,2]$. Then, by the implicit function theorem, the finite covering property of $[0,2]$, (\ref{114}) and (\ref{117}), there is an open neighborhood $V_1(0)\subset U(0)$ such that
$z=h(t,y)$ is continuously differentiable for each $(t,y)\in [0,2]\times V_1(0)$. Similarly, there is an open neighborhood $V_2(0)\subset U(0)$ such that $w=\tilde{h}(t,y)$ is continuously differentiable for each $(t,y)\in [0,2]\times V_2(0)$.
Thus, setting $V(0)=V_1(0)\cap V_2(0)$, it follows from (\ref{111}) and the chain-rule that
$$|(\tilde{\Pi}^\circ_pS^t_\xi K)_y|$$
is continuously differentiable for each $(t,y)\in[0,2]\times V(0)$ (there is the one-sided derivative at $t=0,2$).

Taking $y=0$, by (\ref{112}), (\ref{111}) and $\rho_{\tilde{\Pi}^\circ_pS^t_\xi K}(0,-1)=\rho_{\tilde{\Pi}^\circ_pS^t_\xi K}(0,1) $,

\begin{eqnarray*}
  \frac{\partial|(\tilde{\Pi}^\circ_pS^t_\xi K)_0|}{\partial t}&=&\frac{\partial\rho_{\tilde{\Pi}^\circ_pS^t_\xi K}(0,1)}{\partial t}+\frac{\partial\rho_{\tilde{\Pi}^\circ_pS^t_\xi K}(0,-1)}{\partial t} \\
   &=& 2 \frac{\partial\rho_{\tilde{\Pi}^\circ_pS^t_\xi K}(0,1)}{\partial t}.
\end{eqnarray*}
\end{proof}
\subsection{The convexity of $h^p_{\tilde{\Pi}_p S_\xi^{t} K}(y,s)$ with respect to $t$}
\begin{lemma}\label{91}
Let $K$ be an  orgin-symmetric convex body of class $C^2_+$ and $\xi\in S^{n-1},p>1$.  For each fixed $(y,s)\in S^{n-1}\subset\xi^\bot\times\mathrm{span}\{\xi\}$,
the function $G(t)=h^p_{\tilde{\Pi}_p S_\xi^{t} K}(y,s),t\in[0,2]$ is convex and $h^p_{\tilde{\Pi}_p S_\xi^{2-t} K}(y,s)=h^p_{\tilde{\Pi}_p S_\xi^{t} K}(-y,s)$.
\end{lemma}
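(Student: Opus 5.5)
Everything follows from the integral representation \eqref{64S} of Lemma \ref{Steinerformula}. Put $f_t=(1-\frac{t}{2})f+\frac{t}{2}g$. By linearity of $\langle\cdot\rangle$,
\begin{equation*}
G(t)=2\int_{\mathrm{int}(K|\xi^\bot)}\big|s-y\cdot\nabla f_t(x)\big|^p\,\langle f_t\rangle^{1-p}(x)\,dx.
\end{equation*}
For fixed $x$, the quantity $a(t)=s-y\cdot\nabla f_t(x)$ is affine in $t$. The quantity $c(t)=\langle f_t\rangle(x)=(1-\frac{t}{2})\langle f\rangle(x)+\frac{t}{2}\langle g\rangle(x)$ is also affine in $t$, and by \eqref{71} it is strictly positive, since it equals a support function of $S^t_\xi K\in\mathcal K^n_o$.

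The heart of the argument is that $\phi(a,c)=|a|^pc^{1-p}$ is convex on $\mathbb R\times(0,\infty)$. It is the perspective function $c\,\psi(a/c)$ of the convex function $\psi(u)=|u|^p$, and perspectives of convex functions are convex. Alternatively, this is exactly the two-point convexity encoded in Lemma \ref{60}, extended to signed $a$ via the triangle inequality, as in the proof of Theorem \ref{65}. Since $(a(t),c(t))$ depends affinely on $t$, the map $t\mapsto\phi(a(t),c(t))$ is convex for each $x$. Integrating over $x$ preserves convexity, so $G$ is convex on $[0,2]$.

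For the symmetry, replacing $t$ by $2-t$ swaps the roles of $f$ and $g$: $f_{2-t}=\frac{t}{2}f+(1-\frac{t}{2})g=g_t$, where $g_t$ is the function appearing in \eqref{66S}. Applying \eqref{66S} to $S^{2-t}_\xi K$ gives
\begin{equation*}
h^p_{\tilde{\Pi}_pS^{2-t}_\xi K}(y,s)=2\int|s+y\cdot\nabla f_t|^p\langle f_t\rangle^{1-p}dx,
\end{equation*}
and this is $h^p_{\tilde{\Pi}_pS^{t}_\xi K}(-y,s)$ by \eqref{64S}.

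A quicker route for the symmetry uses Lemma \ref{68}: $S^{2-t}_\xi K=\sigma_{\xi^\bot}(S^t_\xi K)$. The $L^p$ projection operator commutes with orthogonal maps, so $h_{\tilde\Pi_p S^{2-t}_\xi K}(y,s)=h_{\tilde\Pi_pS^t_\xi K}(y,-s)$. Origin symmetry of $\tilde\Pi_p$ then turns this into $h_{\tilde\Pi_pS^t_\xi K}(-y,s)$.

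The main obstacle is only bookkeeping. One must check that $\langle f_t\rangle>0$ on the interior, which follows from \eqref{71} because the origin lies in the interior of $S^t_\xi K$. One must also confirm that the integrals are finite, so that convexity passes through the integral; this uses the $L^1$ bounds on $|\nabla f|,|\nabla g|,\langle f\rangle,\langle g\rangle$ already established in the proof of Proposition \ref{73}.
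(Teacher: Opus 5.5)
Your proof is correct and is essentially the paper's. The paper also argues pointwise in $x$ from \eqref{64S}. It proves only midpoint convexity, via the triangle inequality and Lemma \ref{60} (which is exactly the convexity of the perspective $|a|^p c^{1-p}$ you invoke), and then obtains convexity using the continuity of $G$ from Lemma \ref{113}; your perspective-function version handles all convex weights at once and needs no continuity input. For the symmetry, the paper uses \eqref{88} and the substitution $x=-z$ in \eqref{64S}, which carries the same content as your appeal to \eqref{66S}. Your alternative via Lemma \ref{68}, the $O(n)$-equivariance of $\tilde{\Pi}_p$ and the evenness of $h_{\tilde{\Pi}_p L}$ is also valid and avoids the integral formula altogether.
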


\begin{proof}
Let $t_1,t_2\in[0,2]$. From the representation (\ref{62}), then
$$S_\xi^{\frac{t_1+t_2}{2}}K=\Big\{(x,\lambda):-\Big((1-\frac{t_1+t_2}{4})g(x)+\frac{t_1+t_2}{4}f(x)\Big)\leq\lambda\leq (1-\frac{t_1+t_2}{4})f(x)+\frac{t_1+t_2}{4}g(x),x\in K|\xi^\bot\Big\}.$$
Using formula (\ref{64}), the triangle inequality and Lemma \ref{60}, we calculate that
\begin{eqnarray}
\nonumber&\ &h^p_{\tilde{\Pi}_p S_\xi^{(t_1+t_2)/2}K}(y,s) \\
\nonumber&=&2\int_{\mathrm{int}(K|\xi^\bot)}|s- (1-\frac{t_1+t_2}{4})y\cdot\nabla f-\frac{t_1+t_2}{4}y\cdot\nabla g|^p\langle (1-\frac{t_1+t_2}{4})f+\frac{t_1+t_2}{4}g\rangle^{1-p}dx\\
\nonumber   &\leq& 2\int_{\mathrm{int}(K|\xi^\bot)}\frac{1}{2}\big(|s- (1-\frac{t_1}{2})y\cdot\nabla f-\frac{t_1}{2}y\cdot\nabla g|+|s- (1-\frac{t_2}{2})y\cdot\nabla f-\frac{t_2}{2}y\cdot\nabla g)|\big)^p\big((1-\frac{t_1}{2})\langle f\rangle+\frac{t_1}{2}\langle g\rangle\\
\nonumber&+&(1-\frac{t_2}{2})\langle f\rangle+\frac{t_2}{2}\langle g\rangle\big)^{1-p}dx \\
\nonumber   &\leq&  \int_{\mathrm{int}(K|\xi^\bot)}|s- (1-\frac{t_1}{2})y\cdot\nabla f-\frac{t_1}{2}y\cdot\nabla g|^p\big((1-\frac{t_1}{2})\langle f\rangle+\frac{t_1}{2}\langle g\rangle\big)^{1-p}\\
\nonumber   &+& |s- (1-\frac{t_2}{2})y\cdot\nabla f-\frac{t_2}{2}y\cdot\nabla g|^p\big((1-\frac{t_2}{2})\langle f\rangle+\frac{t_2}{2}\langle g\rangle\big)^{1-p}dx\\
\nonumber   &=&\frac{1}{2}h^p_{\tilde{\Pi}_p S_\xi^{t_1}K}(y,s)+\frac{1}{2}h^p_{\tilde{\Pi}_p S_\xi^{t_2}K}(y,s). \\
 &\ & \label{108}
\end{eqnarray}
Combined with the function $t\longmapsto h^p_{\tilde{\Pi}_p S_\xi^t K}(y,s)$ is continuous by Lemma \ref{113}, this implies
 $t\longmapsto h^p_{\tilde{\Pi}_p S_\xi^t K}(y,s)$ is convex.

Next, let $t\in[0,2]$. It follows from (\ref{64}), (\ref{88}), change of variables $x=-z$ and $\mathrm{int}(K|\xi^\bot)$ is origin-symmetric that
\begin{eqnarray*}
 h^p_{\tilde{\Pi}_pS^{2-t}_\xi K}(y,s)&=& 2\int_{\mathrm{int}(K|\xi^\bot)}|s-y\cdot\nabla \big((1-\frac{2-t}{2})f(x)+\frac{2-t}{2}g(x)\big)|^p\langle (1-\frac{2-t}{2})f+\frac{2-t}{2}g\rangle^{1-p}(x)dx \\
 &=&2\int_{\mathrm{int}(K|\xi^\bot)}|s-y\cdot\nabla \big(\frac{t}{2}f(x)+(1-\frac{t}{2})g(x)\big)|^p \Big(\frac{t}{2}\langle f \rangle(x)+(1-\frac{t}{2})\langle g\rangle(x)\Big)^{1-p}dx\\
 &=&2\int_{\mathrm{int}(K|\xi^\bot)}|s-y\cdot\nabla \big(\frac{t}{2}g(-x)+(1-\frac{t}{2})f(-x)\big)|^p \Big(\frac{t}{2}\langle g \rangle(-x)+(1-\frac{t}{2})\langle f\rangle(-x)\Big)^{1-p}dx\\
 &=&2\int_{\mathrm{int}(K|\xi^\bot)}|s+y\cdot\nabla \big(\frac{t}{2}g(z)+(1-\frac{t}{2})f(z)\big)|^p \Big(\frac{t}{2}\langle g \rangle(z)+(1-\frac{t}{2})\langle f\rangle(z)\Big)^{1-p}dz\\
 &=&h^p_{\tilde{\Pi}_pS^{t}_\xi K}(-y,s).
\end{eqnarray*}

\end{proof}

\subsection{Proof of Theorem \ref{14}}

By homogeneity, if there is a constant $c>0$ such that $\Gamma_p \Pi_p^\circ K=cK$, then there exists a constant $c_1>0$ such that $\tilde{\Gamma}_p \tilde{\Pi}_p^\circ K=c_1K$, and, by Lemma \ref{11}, one has $K$ is of $C^2_+$ and origin-symmetric.
By this fact together with Proposition \ref{73} and $\nabla'h_K(\theta_0)=(x,f(x))$, we have
{\small

\begin{eqnarray*}
  \frac{d|\tilde{\Pi}^\circ_pS^t_\xi K|}{dt}\Big|_{t=0^+}&=&\int_{\mathrm{int}(K|\xi^\bot)} \nabla'\Big(h_{\tilde{\Gamma}_p\tilde{\Pi}^\circ_pK}\Big)(\theta_0)\cdot(\nabla g-\nabla f,0)h^{1-p}_{ K}(\theta_0)h^{p-1}_{\tilde{\Gamma}_p\tilde{\Pi}^\circ_p K}(\theta_0)+
     \frac{p-1}{p}\langle g-f\rangle h^{-p}_{ K}(\theta_0)h^p_{\tilde{\Gamma}_p\tilde{\Pi}^\circ_p K}(\theta_0) dx\\
   &=& \int_{\mathrm{int}(K|\xi^\bot)} \nabla'\Big(h_{c_1K}\Big)(\theta_0)\cdot(\nabla g-\nabla f,0)h^{1-p}_{ K}(\theta_0)h^{p-1}_{c_1 K}(\theta_0)+
     \frac{p-1}{p}\langle g-f\rangle h^{-p}_{ K}(\theta_0)h^p_{c_1 K}(\theta_0) dx\\
   &=& c_1^p\int_{\mathrm{int}(K|\xi^\bot)} \nabla'\Big(h_{K}\Big)(\theta_0)\cdot(\nabla g-\nabla f,0)+
     \frac{p-1}{p}\langle g-f\rangle  dx\\
   &=& c_1^p\int_{\mathrm{int}(K|\xi^\bot)} x\cdot(\nabla g(x)-\nabla f(x))+\frac{p-1}{p}(\langle g\rangle(x)-\langle f\rangle(x)) dx.
\end{eqnarray*}
}
Since $K$ is origin-symmetric, then $K|{\xi^\bot}$ is also origin-symmetric, and $x\cdot(\nabla g(x)-\nabla f(x)),\langle g\rangle(x)-\langle f\rangle(x)$ are both odd functions on $ \mathrm{int}(K|{\xi^\bot})$.
Thus,
\begin{equation}\label{100}
  \frac{d|\tilde{\Pi}^\circ_pS^t_\xi K|}{dt}\Big|_{t=0^+}=0.
\end{equation}
Next using Corollary \ref{69}, we have the function
$$t\longmapsto|(\tilde{\Pi}^\circ_pS^t_\xi K)_y|$$
is non-decreasing on $[0,1]$ and then for $r\in [0,1)$ and $y\in\xi^\bot$,
$$\frac{\partial |(\tilde{\Pi}^\circ_pS^t_\xi K)_y|}{\partial t}\Big|_{t=r^+}\geq 0.$$
Thus choosing $r=0$, it follows from (\ref{100}), (\ref{102}) and Fatou's Lemma that
\begin{eqnarray*}
  0=\frac{d|\tilde{\Pi}^\circ_pS^t_\xi K|}{dt}\Big|_{t=0^+}&=&\liminf_{t\rightarrow 0^+}\frac{|\tilde{\Pi}^\circ_pS^t_\xi K|-|\tilde{\Pi}^\circ_p K|}{t}\\
  &=&\liminf_{t\rightarrow 0^+}\int_{\xi^\bot}\frac{|(\tilde{\Pi}^\circ_pS^t_\xi K)_y|-|(\tilde{\Pi}^\circ_p K)_y|}{t}dy \\
  &\geq&\int_{\xi^\bot}\liminf_{t\rightarrow 0^+}\frac{|(\tilde{\Pi}^\circ_pS^t_\xi K)_y|-|(\tilde{\Pi}^\circ_p K)_y|}{t}dy \\
   &=& \int_{\xi^\bot}\frac{\partial|(\tilde{\Pi}^\circ_pS^t_\xi K)_y|}{\partial t}\Big|_{t=0^+}dy \\
   &\geq& 0.
\end{eqnarray*}
Thus, we have
$$\int_{\xi^\bot}\frac{\partial|(\tilde{\Pi}^\circ_pS^t_\xi K)_y|}{\partial t}\Big|_{t=0^+}dy=0.$$
From Lemma \ref{113}, then $\frac{\partial|(\tilde{\Pi}^\circ_pS^t_\xi K)_y|}{\partial t}\Big|_{t=0^+}$ is continuous at $y=0$, so one has
$$\frac{\partial|(\tilde{\Pi}^\circ_pS^t_\xi K)_0|}{\partial t}\Big|_{t=0^+}=0,$$
that is,
$$2\frac{\partial \rho_{\tilde{\Pi}^\circ_pS^t_\xi K}(0,1)}{\partial t}\Big|_{t=0^+}=0.$$
Then this implies
\begin{equation}\label{107}
  \frac{\partial \rho_{\tilde{\Pi}^\circ_pS^t_\xi K}(0,1)}{\partial t}\Big|_{t=0^+} =0.
\end{equation}
Again, from Corollary \ref{69}, Lemma \ref{113} and taking $y=0$, the differentiable function
$$t\longmapsto|(\tilde{\Pi}^\circ_pS^t_\xi K)_0|=2\rho_{\tilde{\Pi}^\circ_pS^t_\xi K}(0,1)$$
attains the maximum at $t=1$ on $[0,2]$. Then we have
$$\frac{\partial \rho_{\tilde{\Pi}^\circ_pS^t_\xi K}(0,1)}{\partial t}\Big|_{t=1} =0.$$
Combined with (\ref{107}) and the chain-rule, one has
$$\frac{\partial h^p_{\tilde{\Pi}_pS^t_\xi K}(0,1)}{\partial t}\Big|_{t=1}=\frac{\partial h^p_{\tilde{\Pi}_pS^t_\xi K}(0,1)}{\partial t}\Big|_{t=0^+}=0.$$
So, it follows from Lemma \ref{91} that for $r\in (0,1]$,
$$0=\frac{\partial h^p_{\tilde{\Pi}_pS^t_\xi K}(0,1)}{\partial t}\Big|_{t=0^+}\leq \frac{\partial h^p_{\tilde{\Pi}_pS^t_\xi K}(0,1)}{\partial t}\Big|_{t=r^+}\leq\frac{\partial h^p_{\tilde{\Pi}_pS^t_\xi K}(0,1)}{\partial t}\Big|_{t=1}=0 $$
and then
$$\frac{\partial h^p_{\tilde{\Pi}_pS^t_\xi K}(0,1)}{\partial t}\Big|_{t=r}=\frac{\partial h^p_{\tilde{\Pi}_pS^t_\xi K}(0,1)}{\partial t}\Big|_{t=r^+}=0.$$
This implies that
$$h^p_{\tilde{\Pi}_pS^0_\xi K}(0,1)=h^p_{\tilde{\Pi}_pS^1_\xi K}(0,1).$$
From Lemma \ref{91} also, one has $h^p_{\tilde{\Pi}_pS^2_\xi K}(-y,s)=h^p_{\tilde{\Pi}_pS^0_\xi K}(y,s)$ for $(y,s)\in S^{n-1}$, then deducing
$$h^p_{\tilde{\Pi}_pS^2_\xi K}(0,1)=h^p_{\tilde{\Pi}_pS^0_\xi K}(0,1)=h^p_{\tilde{\Pi}_pS^1_\xi K}(0,1).$$
Thus taking $t_1=0$ and $t_2=2$ in (\ref{108}), there is the equality, so
$$\big(1+1\big)^p\big(\langle f\rangle
+\langle g\rangle\big)^{1-p}
  =  \langle f\rangle^{1-p}
   +\langle g\rangle^{1-p}.$$
Together with Lemma \ref{60}, we have
$$\langle g\rangle(x)=\langle f\rangle (x),$$
that is,
$$\langle g-f\rangle (x)=0$$
for each $x\in\mathrm{int}(K|\xi^\bot)$.
Combining with (\ref{101}), this show that $g-f$ is linear on $\mathrm{int}(K|\xi^\bot)$ and hence that
the chords of $K$ parallel to $\xi$ have midpoints that are coplanar.
Finally, according to the arbitrariness of $\xi$, by the classical Bertrand-Brunn theorem (see e.g.,\cite{T96}), then $K$ must be an origin-symmetric ellipsoid.
{\hfill$\Box$}

\end{document}